\begin{document}

\newtheorem{theorem}[subsection]{Theorem}
\newtheorem{proposition}[subsection]{Proposition}
\newtheorem{lemma}[subsection]{Lemma}
\newtheorem{corollary}[subsection]{Corollary}
\newtheorem{conjecture}[subsection]{Conjecture}
\newtheorem{prop}[subsection]{Proposition}
\numberwithin{equation}{section}
\newcommand{\mr}{\ensuremath{\mathbb R}}
\newcommand{\mc}{\ensuremath{\mathbb C}}
\newcommand{\dif}{\mathrm{d}}
\newcommand{\intz}{\mathbb{Z}}
\newcommand{\ratq}{\mathbb{Q}}
\newcommand{\natn}{\mathbb{N}}
\newcommand{\comc}{\mathbb{C}}
\newcommand{\rear}{\mathbb{R}}
\newcommand{\prip}{\mathbb{P}}
\newcommand{\uph}{\mathbb{H}}
\newcommand{\fief}{\mathbb{F}}
\newcommand{\majorarc}{\mathfrak{M}}
\newcommand{\minorarc}{\mathfrak{m}}
\newcommand{\sings}{\mathfrak{S}}
\newcommand{\fA}{\ensuremath{\mathfrak A}}
\newcommand{\mn}{\ensuremath{\mathbb N}}
\newcommand{\mq}{\ensuremath{\mathbb Q}}
\newcommand{\half}{\tfrac{1}{2}}
\newcommand{\f}{f\times \chi}
\newcommand{\summ}{\mathop{{\sum}^{\star}}}
\newcommand{\chiq}{\chi \bmod q}
\newcommand{\chidb}{\chi \bmod db}
\newcommand{\chid}{\chi \bmod d}
\newcommand{\sym}{\text{sym}^2}
\newcommand{\hhalf}{\tfrac{1}{2}}
\newcommand{\sumstar}{\sideset{}{^*}\sum}
\newcommand{\sumprime}{\sideset{}{'}\sum}
\newcommand{\sumprimeprime}{\sideset{}{''}\sum}
\newcommand{\shortmod}{\ensuremath{\negthickspace \negthickspace \negthickspace \pmod}}
\newcommand{\V}{V\left(\frac{nm}{q^2}\right)}
\newcommand{\sumi}{\mathop{{\sum}^{\dagger}}}
\newcommand{\mz}{\ensuremath{\mathbb Z}}
\newcommand{\leg}[2]{\left(\frac{#1}{#2}\right)}
\newcommand{\muK}{\mu_{\omega}}
\newcommand{\sumflat}{\sideset{}{^\flat}\sum}
\newcommand{\thalf}{\tfrac12}
\newcommand{\lam}{\lambda}

\title[The mean square of real character sums]{The mean square of real character sums}

\date{\today}
\author{Peng Gao}
\address{Department of Mathematics, School of Mathematics and Systems Science, Beihang University, P. R. China}
\email{penggao@buaa.edu.cn}

\begin{abstract}
In this paper, we evaluate a smoothed sum of the form $\displaystyle \sumstar_{d \leq X}\left(\sum_{n \leq Y} \leg {8d}{n} \right)^2$, where $\leg {8d}{\cdot}$ is the Kronecker symbol and $\sumstar$ denotes a sum over positive odd square-free integers.
\end{abstract}

\maketitle

\noindent {\bf Mathematics Subject Classification (2010)}: 11L05, 11L40 \newline

\noindent {\bf Keywords}: mean square, quadratic Dirichlet character
\section{Introduction}

  Estimations for character sums play vital roles in analytic number theory. Among the many results developed on this topic, we recall the classical P\'olya-Vinogradov inequality (see for example \cite[Chap. 23]{Da}), which asserts that for any non-principal Dirichlet character $\chi$ modulo $q$  and $M \in \mz$, $N \in \mn$, we have
\begin{align}
\label{pv}
   \sum_{M < n \leq M+N} \chi(n) \ll q^{1/2}\log q.
\end{align}

  The P\'olya-Vinogradov inequality can be regarded as a first moment estimation for Dirichlet characters and we may thus consider estimations for higher powers of character sums. For this, we restrict our attention to the second moment (the mean square) and the set of real Dirichlet characters, i.e. quadratic or principal characters. In this setting, we note the following result due to M. V. Armon \cite[Theorem 2]{Armon}:
\begin{align}
\label{squareJacobi}
   \sum_{\substack {|D| \leq X \\ D \in \mathcal{D}}} \left| \sum_{n \leq Y} \Big (\frac {D}{n} \Big ) \right|^2 \ll XY \log X,
\end{align}
   where $\mathcal{D}$ is the set of non-square quadratic discriminants and $\chi_D=(\frac {D}{\cdot})$ is the Kronecker symbol. The above estimation was initiated by M. Jutila in \cites{Jutila1}, who showed the result useful by applying weaker estimations in \cites{Jutila, Jutila1} to study problems related to the mean values of class numbers of imaginary quadratic number  fields and the second moment of Dirichlet $L$-functions with primitive
   quadratic characters.

   We remark that it follows from the proof of \cite[Theorem 2]{Armon} that when $Y \leq X^{1/2}$, we can obtain an asymptotic formula for the mean square expression considered in \eqref{squareJacobi} such that for some constant $c$,
\begin{align}
\label{squareJacobiasmp}
   \sum_{\substack {|D| \leq X \\ D \in \mathcal{D}}} \left| \sum_{n \leq Y} \Big (\frac {D}{n} \Big ) \right|^2 =c XY \log Y+o(XY\log Y)+O(Y^3 \log Y).
\end{align}

   In view of this and the P\'olya-Vinogradov inequality, one may attempt to obtain an asymptotic formula for the sum in \eqref{pv} with $\chi(n)$ replaced by $\chi_D(n)$ there, provided that one takes an extra average over $D$ as well. More explicitly, one can consider the following sum:
\begin{align*}
    \sum_{\substack{m \leq Y \\ (m,2)=1}}\sum_{\substack{n \leq Y \\ (n,2)=1}} \Big (\frac {m}{n} \Big ).
\end{align*}

    In \cite{CFS},  J. B. Conrey, D. W. Farmer, and K. Soundararajan obtained an asymptotic expression for the above sum for all $X,Y>0$. We note here that the most subtle situation is when $X$ and $Y$ are of comparable size since it is relatively easy to apply the P\'olya-Vinogradov inequality to control the error terms when $X, Y$ are far apart in size. A key step used in \cite{CFS} is an application of a type of Poisson summation formula
(see Lemma \ref{lem2} below) involving quadratic Dirichlet characters established by Soundararajan in \cite[Lemma 2.6]{sound1}.

   The method in \cite{CFS} can be applied to study mean values of any arithmetic function twisted by quadratic Dirichlet characters. For example, squaring out the left-hand side expression in \eqref{squareJacobi} and interchanging the order of
   summations yields
\begin{align}
\label{ms1}
   \sum_{\substack {|D| \leq X \\ D \in \mathcal{D}}} \left| \sum_{n \leq Y} \Big (\frac {D}{n} \Big ) \right|^2 =\sum_{n \leq Y}\sum_{\substack {|D| \leq
   X \\ D \in \mathcal{D}}} \leg{D}{n}d(n)+\sum_{\substack{n > Y \\ n=n_1n_2\\ n_1 \leq Y, n_2 \leq Y}}\sum_{\substack {|D| \leq X \\ D \in \mathcal{D}}}
   \leg{D}{n},
\end{align}
   where $d(n)$ is the divisor function.

   Consideration on the first term of the right-hand side expression above leads to the study of the following sum:
\begin{align}
\label{quaddiv}
  \sum_{\substack {m \leq X \\ (m, 2)=1}}\sum_{\substack {n \leq Y \\ (n, 2)=1}} \leg {m}{n}d(n).
\end{align}
  Following the treatment in \cite{CFS}, an asymptotic formula for the above sum is given by the author in \cite{Gao1} recently, which is valid for all large $X, Y$ satisfying $X^{\epsilon} \ll Y \ll X^{1-\epsilon}$ for any $\epsilon>0$.

   Comparing \eqref{ms1} with  \eqref{quaddiv}, we see that \eqref{quaddiv} resembles part of the left-hand side expression in \eqref{squareJacobiasmp}. It is certainly interesting then to study the right-hand side expression in \eqref{squareJacobiasmp} itself, with the aim to extend the asymptotic formula given in \eqref{squareJacobiasmp} to $Y>X^{1/2}$. It is our goal in this paper to do so and for technical reasons, we consider the following sum
\begin{align}
\label{SXY}
    S(X,Y;\Phi, W)=\sumstar_{\substack {d}} \left| \sum_{n} \Big (\frac {8d}{n} \Big )\Phi \left(\frac {n}{Y} \right ) \right |^2 W\left(\frac {d}{X} \right ),
\end{align}
   where $\Phi$ and $W$ are smooth, compactly supported functions and we use $\sumstar$ to denote a sum over positive odd square-free
   integers throughout the paper.

   Similar to \eqref{squareJacobiasmp}, it is easy to obtain an asymptotic formula for $S(X,Y;\Phi, W)$ when $Y \leq X^{1/2}$. Thus, we may assume that $Y^2>X$ and our result in this paper is the following
\begin{theorem}
\label{meansquare}
   Let $\Phi$ and $W$ be two smooth, compactly supported functions whose supports are contained in $(0,1)$. Let $S(X,Y;\Phi, W)$ be defined in \eqref{SXY}. For large $X$ and  $Y$ with $Y \leq X \leq Y^2$, we have for any $\epsilon>0$,
\begin{align}
\label{S}
\begin{split}
   S(X,Y;\Phi, W) =& C_1(\Phi, W)XY\log Y +C_2(\Phi, W)XY\log (\frac {Y^2}{X})+O \left (XY+XY(\frac {Y^2}{X} )^{-\frac 18+\epsilon}+(XY)^{1 + \varepsilon}\left (\frac YX \right)^{1/2} \right),
\end{split}
\end{align}
  where $C_1(\Phi, W), C_2(\Phi, W)$ are given in \eqref{C1C2}.
\end{theorem}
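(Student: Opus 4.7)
The plan is to follow the framework of \cite{CFS} and \cite{Gao1}. Opening the square and using the multiplicativity of the Kronecker symbol in the upper argument gives
\begin{align*}
  S(X,Y;\Phi,W) = \sumstar_{d} W\!\left(\frac{d}{X}\right) \sum_n \leg{8d}{n} G_Y(n), \qquad G_Y(n) := \sum_{n_1 n_2 = n} \Phi\!\left(\frac{n_1}{Y}\right)\Phi\!\left(\frac{n_2}{Y}\right),
\end{align*}
so $G_Y$ is a smooth, divisor-like weight supported on $n \ll Y^2$. I would then split the inner sum according to whether $n$ is a perfect square, which leads to a ``diagonal'' contribution and an ``off-diagonal'' contribution.

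The diagonal part, coming from $n = m^2$ with $(m, 2d) = 1$, will produce the first main term. After lifting the square-free restriction on $d$ via $\mu^2(d) = \sum_{a^2 \mid d}\mu(a)$ and handling the coprimality $(m,2d)=1$ by a further M\"obius convolution, I would evaluate this piece by Mellin inversion. The residue at $s = 1$ of the resulting integrand (which mixes the Riemann zeta function with the Mellin transform of $G_Y$) generates the $\log Y$ factor, while the $d$-sum supplies the factor of size $X$; together these yield the contribution $C_1(\Phi,W)\,XY\log Y$.

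For non-square $n$ I would apply Lemma \ref{lem2}, Soundararajan's Poisson summation formula for quadratic characters, to the $d$-variable, once again after M\"obius removal of the square-free condition. This converts the $d$-sum into a dual sum over integers $k \geq 0$, carrying a Gauss-type character sum $G_k(n)$ weighted by the Fourier/Mellin transform of $W$. The rapid decay of $\widehat{W}$ together with the support $n \ll Y^2$ restricts the effective dual variable to $k \ll (Y^2/X)^{1+\varepsilon}$. Isolating the zero-frequency term $k = 0$ and evaluating it via a second Mellin inversion produces the second main term $C_2(\Phi,W)\,XY\log(Y^2/X)$; the length of this logarithm is precisely $\log(Y^2/X)$ because it matches the size of the effective dual range.

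The main obstacle is the estimation of the $k \neq 0$ part of the dual sum. One needs bounds for weighted character sums of the form $\sum_n G_k(n) G_Y(n)$: after writing $n = n_0 \ell^2$ with $n_0$ square-free and separating $k$ into its square and square-free parts, this reduces to sharp estimates for short character sums, and a Burgess/large-sieve type saving is what delivers the factor $(Y^2/X)^{-1/8+\varepsilon}$. Balancing this against the loss incurred by the M\"obius parameter $a$ used to remove the square-free condition, together with the trivial diagonal-to-off-diagonal interaction, produces the remaining error terms $XY$ and $(XY)^{1+\varepsilon}(Y/X)^{1/2}$. The hypothesis $Y \leq X \leq Y^2$ is exactly what is needed to keep all three error terms smaller than the two main terms.
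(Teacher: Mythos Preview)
Your decomposition contains a genuine error in locating the second main term. If you first strip off the diagonal $n=\square$ and then apply Poisson summation (Lemma~\ref{lem2}) only to the non-square $n$, the zero-frequency term $k=0$ contributes nothing: by Lemma~\ref{lem1} one has $G_0(n)=\varphi(n)$ when $n$ is a perfect square and $G_0(n)=0$ otherwise. So the $k=0$ piece of your off-diagonal Poisson sum vanishes identically and cannot produce $C_2(\Phi,W)\,XY\log(Y^2/X)$. In the paper the Poisson summation is applied to the full $n_1n_2$-sum (not just the non-square part); the $k=0$ term then \emph{is} the diagonal and yields the first main term $C_1 XY\log Y$, while the second main term emerges from the $k\neq 0$ analysis. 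Concretely, the $k\neq 0$ contribution is expressed as a triple contour integral in $(s,u,v)$ \`a la Soundararajan--Young \cite{S&Y}; after writing the sums over $n_1,n_2,k$ as the Dirichlet series $Z_\epsilon$ of Lemma~\ref{lemma:Z} and moving the $u,v$ lines to pick up the pole of $\zeta(\tfrac12+u)\zeta(\tfrac12+v)$ at $u=v=\tfrac12$ (the case $k_1=1$), one is left with a single integral in $s$ whose integrand has a \emph{double} pole at $s=0$ coming from $\Gamma(s)\zeta(1+2s)$. The residue there is what produces $C_2 XY\log(Y^2/X)$; the factor $\log(Y^2/X)$ arises because $\widetilde h(1-s,\tfrac12+s,\tfrac12+s)$ carries $X^{1-s}Y^{1+2s}=XY(Y^2/X)^s$.

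The source of the exponent $-\tfrac18$ is also different from what you suggest. It is not a Burgess or large-sieve saving on short character sums; it is the boundary of absolute convergence of the Euler product $Z_3(\gamma;q)$ appearing in Lemma~\ref{lemma:Z2}. After extracting the factors $\zeta(2s)\zeta(1+2s)$, the remaining product has local factors of the shape $1+O(p^{-3/2-4\gamma})$, which converges precisely for $\text{Re}(\gamma)>-\tfrac18$; shifting the $s$-contour to $\text{Re}(s)=-\tfrac18+\varepsilon$ then gives the error $XY(Y^2/X)^{-1/8+\varepsilon}$. The large-sieve input (Lemma~\ref{lem:HB}) enters elsewhere, to bound the sum over genuine fundamental discriminants $k_1\neq 1$ and the tail $S_2(h)$ of the M\"obius truncation; balancing that truncation at $Z=(X/Y)^{1/2}$ is what yields the error $(XY)^{1+\varepsilon}(Y/X)^{1/2}$, in agreement with your intuition on that point.
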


   One checks that \eqref{S} gives a valid asymptotic formula when $X^{1/2} \ll Y \ll X^{1-\epsilon}$ for any $\epsilon>0$. The proof of Theorem \ref{meansquare} uses not only ideas of \cite{CFS}, but also those from \cite{Young1, Young2} and \cite{S&Y}. In particular, K. Soundararajan and M.P. Young used in \cite{S&Y} a triple contour integral to evaluate asymptotically a smoothed sum of products of coefficients of Fourier expansions of modular forms twisted by quadratic Dirichlet characters. Our approach in the proof of Theorem \ref{meansquare} is clearly inspired by their method.

\section{Preliminaries}
\label{sec 2}

   In this section, we gather a few auxiliary results needed in the proof of Theorem \ref{meansquare}.
\subsection{Gauss sums}
\label{section:Gauss}

   For all odd
    integers $k$ and all integers $m$, we introduce the following Gauss-type
    sums as in \cite[Sect. 2.2]{sound1}
\begin{align}
\label{G}
    G_m(k)=
    \left( \frac {1-i}{2}+\left( \frac {-1}{k} \right)\frac {1+i}{2}\right)\sum_{a \shortmod{k}}\left( \frac {a}{k} \right) e \left( \frac {am}{k} \right),
\end{align}
   where $e(x)=e^{2\pi i x}$. We quote \cite[Lemma 2.3]{sound1} which determines $G_m(k)$.
\begin{lemma}
\label{lem1}
   If $(k_1,k_2)=1$ then $G_m(k_1k_2)=G_m(k_1)G_m(k_2)$. Suppose that $p^a$ is
   the largest power of $p$ dividing $m$ (put $a=\infty$ if $m=0$).
   Then for $b \geq 1$ we have
\begin{equation*}
\label{011}
    G_m(p^b)= \left\{\begin{array}{cl}
    0  & \mbox{if $b\leq a$ is odd}, \\
    \varphi(p^b) & \mbox{if $b\leq a$ is even},  \\
    -p^a  & \mbox{if $b=a+1$ is even}, \\
    (\frac {m/p^a}{p})p^a\sqrt{p}  & \mbox{if $b=a+1$ is odd}, \\
    0  & \mbox{if $b \geq a+2$}.
    \end{array}\right.
\end{equation*}
\end{lemma}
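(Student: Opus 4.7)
The plan is to treat the two assertions of the lemma in turn. Write $\epsilon_k := \tfrac{1-i}{2} + \bigl(\tfrac{-1}{k}\bigr)\tfrac{1+i}{2}$ for the prefactor in \eqref{G}, so that $\epsilon_k = 1$ when $k \equiv 1 \pmod 4$ and $\epsilon_k = -i$ when $k \equiv 3 \pmod 4$. For the multiplicativity claim, given coprime odd $k_1, k_2$, I would parametrize residues modulo $k_1 k_2$ via the Chinese Remainder Theorem as $a \equiv k_2 a_1 + k_1 a_2$ with $a_j$ ranging over residues mod $k_j$. This factors the exponential cleanly as $e(am/(k_1k_2)) = e(ma_1/k_1)\,e(ma_2/k_2)$, and reduces the Jacobi symbol to
\[
\bigl(\tfrac{a}{k_1 k_2}\bigr) = \bigl(\tfrac{k_2 a_1}{k_1}\bigr)\bigl(\tfrac{k_1 a_2}{k_2}\bigr) = (-1)^{(k_1-1)(k_2-1)/4}\bigl(\tfrac{a_1}{k_1}\bigr)\bigl(\tfrac{a_2}{k_2}\bigr)
\]
by quadratic reciprocity for Jacobi symbols. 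A short check over the three possible residue patterns of $(k_1,k_2)$ modulo $4$ then gives $\epsilon_{k_1 k_2} = (-1)^{(k_1-1)(k_2-1)/4}\epsilon_{k_1}\epsilon_{k_2}$, so the two sign twists cancel and $G_m(k_1 k_2) = G_m(k_1)G_m(k_2)$.

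For the prime-power evaluation with $k = p^b$, $p$ odd, $b \ge 1$, I exploit the identity $\bigl(\tfrac{a}{p^b}\bigr) = \bigl(\tfrac{a}{p}\bigr)^b$, which makes the computation depend sharply on the parity of $b$. When $b$ is even the symbol is the indicator of $(a,p)=1$, and $\epsilon_{p^b} = 1$ since $p^b \equiv 1 \pmod 4$, so the sum becomes a Ramanujan sum $c_{p^b}(m)$; its classical evaluation in terms of the $p$-adic valuation $a=v_p(m)$ produces exactly the three even-$b$ rows of the lemma. When $b$ is odd I split $a = u + pv$ with $0 \le u < p$, $0 \le v < p^{b-1}$, so that $\bigl(\tfrac{a}{p}\bigr) = \bigl(\tfrac{u}{p}\bigr)$ and the double sum factors as
\[
\sum_{u=0}^{p-1}\bigl(\tfrac{u}{p}\bigr)\, e\!\left(\tfrac{um}{p^b}\right)\cdot \sum_{v=0}^{p^{b-1}-1} e\!\left(\tfrac{vm}{p^{b-1}}\right).
\]
The $v$-sum is $p^{b-1}$ or $0$ according as $p^{b-1} \mid m$ or not, which disposes of the range $b \ge a+2$ at once. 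When $p^{b-1} \mid m$, writing $m = p^{b-1} m'$ reduces the $u$-sum to a twist of the classical quadratic Gauss sum $g_p$: it vanishes when $p \mid m'$ (the case $b \le a$ odd, where $\sum_u (\tfrac{u}{p}) = 0$), and equals $\bigl(\tfrac{m'}{p}\bigr) g_p$ when $(m',p)=1$ (the case $b = a+1$ odd). Multiplying by $\epsilon_{p^b}$ and inserting $g_p = \sqrt p$ or $i\sqrt p$ for $p \equiv 1$ or $3 \pmod 4$ respectively yields $\bigl(\tfrac{m/p^a}{p}\bigr) p^a \sqrt p$ uniformly.

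The argument is elementary and boils down to two sign-tracking tasks: verifying the compatibility $\epsilon_{k_1 k_2} = (-1)^{(k_1-1)(k_2-1)/4}\epsilon_{k_1}\epsilon_{k_2}$ in the multiplicativity step, and checking that $\epsilon_{p^b}\, g_p = \sqrt p$ holds uniformly in $p \pmod 4$ when $b$ is odd (both checks use $(-i)(i) = 1$ in the $p \equiv 3 \pmod 4$ case). The whole purpose of the specific prefactor in \eqref{G} is precisely that these signs align, so once one commits to the small finite case analyses the evaluation is routine; hence the main \emph{obstacle} is just the careful bookkeeping needed to confirm that both cancellations come out correctly, rather than any deep ingredient beyond reciprocity and the value of the classical quadratic Gauss sum.
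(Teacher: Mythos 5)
Your proof is correct. Note, however, that the paper does not actually prove this lemma: it is quoted verbatim from \cite[Lemma 2.3]{sound1} (Soundararajan, where the same CRT-plus-reciprocity argument for multiplicativity and the same Ramanujan-sum/Gauss-sum computation at prime powers is carried out), so there is no in-paper proof to compare against. Your reconstruction is the standard one and all the sign checks you flag — $\epsilon_{k_1k_2}=(-1)^{(k_1-1)(k_2-1)/4}\epsilon_{k_1}\epsilon_{k_2}$ and $\epsilon_{p^b}\,g_p=\sqrt{p}$ for odd $b$ — do come out as claimed.
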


\subsection{Poisson Summation}
   For a Schwartz function $F$, we define
\begin{equation} \label{tildedef}
   \widetilde{F}(\xi)=\frac {1+i}{2}\hat{F}(\xi)+\frac
   {1-i}{2}\hat{F}(-\xi)=\int\limits^{\infty}_{-\infty}\left(\cos(2\pi \xi
   x)+\sin(2\pi \xi x) \right)F(x) \dif x,
\end{equation}
   where $\hat{F}$ denotes the Fourier transform of $F$.

    We have the following Poisson summation formula from \cite[Lemma 2.6]{sound1}:
\begin{lemma}
\label{lem2}
   Let $W$ be a smooth function with compact support on the
positive real numbers. For any odd integer $n$,
\begin{equation*}
\label{013}
  \sum_{(d,2)=1}\left( \frac {d}{n} \right)
    W\left( \frac {d}{X} \right)=\frac {X}{2n}\left( \frac {2}{n} \right)
    \sum_k(-1)^kG_k(n)\widetilde{W}\left( \frac {kX}{2n} \right),
\end{equation*}
where $\widetilde{W}$ is defined in \eqref{tildedef} and $G_k(n)$ is defined in \eqref{G}.
\end{lemma}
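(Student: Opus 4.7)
The strategy is to apply classical Poisson summation after separating the coprimality condition $(d,2)=1$. Using the identity $\mathbbm{1}_{d\text{ odd}}=\tfrac{1}{2}\bigl(1-(-1)^d\bigr)$, the sum extends to all $d\in\mathbb{Z}$ (the negative $d$ contribute nothing since $W$ is supported on positive reals):
\[
\sum_{(d,2)=1}\!\left(\frac{d}{n}\right)\!W\!\left(\frac{d}{X}\right)=\tfrac12\sum_{d\in\mathbb{Z}}\!\left(\frac{d}{n}\right)\!W\!\left(\frac{d}{X}\right)-\tfrac12\sum_{d\in\mathbb{Z}}(-1)^{d}\!\left(\frac{d}{n}\right)\!W\!\left(\frac{d}{X}\right).
\]
Since $n$ is odd and positive, the summand in the first sum is periodic in $d$ with period $n$, while the summand in the second is periodic with period $2n$. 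Decomposing each by residue class modulo $n$ (resp.\ $2n$) and applying classical Poisson summation to the smooth sum over translates Fourier-dualises the two expressions to $\frac{X}{n}\sum_k \hat{W}(kX/n)\,g_k(n)$ and $\frac{X}{2n}\sum_k \hat{W}(kX/(2n))\,h_k(n)$ respectively, where $g_k$ and $h_k$ are standard and twisted quadratic Gauss sums.

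\textbf{Combining into the claimed form.} Rescaling the first expansion to the common denominator $2n$ (so that only even frequencies $k$ arise there) and subtracting yields a single sum of the shape $\frac{X}{2n}\sum_k \hat{W}(kX/(2n))\,\Sigma_k(n)$ for a character sum $\Sigma_k(n)$ over residues modulo $2n$. The main technical step is to identify $\Sigma_k(n)$ with $(-1)^k\left(\frac{2}{n}\right)G_k(n)$ and, simultaneously, to pair the contributions of $\pm k$ so as to convert $\hat{W}(\pm kX/(2n))$ into the combination $\widetilde{W}(kX/(2n))$ of \eqref{tildedef}. Splitting $a\bmod 4$ in the sum defining $\Sigma_k(n)$ and applying the quadratic reciprocity sign $\left(\frac{a}{n}\right)=\pm\left(\frac{n}{a}\right)$ produces exactly the normalising factor $\tfrac{1-i}{2}+\left(\tfrac{-1}{n}\right)\tfrac{1+i}{2}$ appearing in the definition \eqref{G} of $G_k(n)$, while the $\left(\frac{2}{n}\right)$ factor emerges from handling the $(-1)^d$ piece (since $n$ odd means the relevant shift in frequency $e(a/2)$ combines with the Jacobi symbol via the supplementary law).

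\textbf{Main obstacle.} The conceptual structure is clean, but the sign and normalisation bookkeeping at the identification step is delicate. The cleanest verification uses multiplicativity: both the Jacobi symbol in its lower entry and, by Lemma \ref{lem1}, $G_m(k)$ in $k$ are multiplicative, so the algebraic identity relating $\Sigma_k(n)$ to $(-1)^k\left(\frac{2}{n}\right)G_k(n)$ reduces to the case of prime-power moduli $n=p^b$. There Lemma \ref{lem1} supplies explicit values of $G_k(p^b)$ that can be matched term-by-term against the standard prime-power evaluations of quadratic Gauss sums, completing the proof. This is essentially the argument given by Soundararajan in \cite[Lemma 2.6]{sound1}.
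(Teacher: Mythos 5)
The paper does not prove this lemma; it is quoted verbatim from \cite[Lemma~2.6]{sound1}, so there is no internal proof to compare against. Your decomposition $\mathbbm{1}_{d\text{ odd}}=\tfrac12\bigl(1-(-1)^d\bigr)$ is a workable alternative to Soundararajan's route (which sums directly over residues $a\bmod 2n$ with $(a,2)=1$, applies Poisson with period $2n$, and then makes the change of variable $a=2b+n$, $b\bmod n$); both paths must evaluate the same underlying Gauss sum, so neither is shorter.

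Two points in your ``Main obstacle'' paragraph are off, however, and would derail the verification as you have planned it. First, no quadratic reciprocity and no ``splitting $a\bmod 4$'' is involved. Writing $g_k(n)=\sum_{a\bmod n}\leg{a}{n}e(ka/n)$, the factor $\leg{2}{n}$ comes from the elementary substitution $a\mapsto 2a$, which gives $g_{2k}(n)=\leg{2}{n}g_k(n)$ (this is what is needed to merge the period-$n$ Poisson expansion, whose frequencies are the even $k$ when written over the common denominator $2n$, with the period-$2n$ one). Likewise, the normalising prefactor $\tfrac{1-i}{2}+\leg{-1}{n}\tfrac{1+i}{2}$ in \eqref{G} is exactly the reciprocal of the constant $\tfrac{1+i}{2}+\leg{-1}{n}\tfrac{1-i}{2}$ produced when one pairs $k$ with $-k$ and uses $g_{-k}(n)=\leg{-1}{n}g_k(n)$ (from $a\mapsto -a$) to convert $\hat W$ into $\widetilde W$ via \eqref{tildedef}; one checks $\bigl(\tfrac{1-i}{2}+\leg{-1}{n}\tfrac{1+i}{2}\bigr)\bigl(\tfrac{1+i}{2}+\leg{-1}{n}\tfrac{1-i}{2}\bigr)=1$ in both cases $\leg{-1}{n}=\pm1$. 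Second, the proposed reduction to prime powers via multiplicativity is both overkill and not directly applicable to your decomposition: the coefficient arising from the $(-1)^d$ piece is $h_k(n)=\sum_{a\bmod 2n}(-1)^a\leg{a}{n}e(ka/2n)$, which is \emph{not} of the form $G_m(\cdot)$ with odd modulus, so Lemma~\ref{lem1} does not factor it. The clean way to handle $h_k$ is to split $a=b$ and $a=b+n$ over $b\bmod n$, which shows $h_k(n)=0$ for $k$ even and $h_k(n)=2g_{(k+n)/2}(n)=2g_{\overline{2}k}(n)$ for $k$ odd; combining this with $g_{2k}(n)=\leg{2}{n}g_k(n)$ for the even-$k$ contribution recovers the coefficient $(-1)^k\leg{2}{n}g_k(n)$ uniformly, after which the $\pm k$ pairing above finishes the proof. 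So the overall plan is sound, but the identification step should be carried out by these two direct substitutions rather than by reciprocity or a prime-power case analysis.
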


\subsection{A mean value estimate}
   In the proof of Theorem \ref{meansquare}, we need the following mean value estimate for Dirichlet $L$-functions.
\begin{lemma} \cite[Lemma 2.5]{sound1}
\label{lem:HB}
 Let $S(Q)$ denote the set of real, primitive characters $\chi$ with conductor $\leq Q$. For any complex number $\sigma+it$  with $\sigma \geq \frac 12$, we have
\begin{equation*}
\label{eq:H-B}
 \sumstar_{\substack{\chi \in S(Q)}} |L(\sigma + it, \chi)|^2 \ll_{\varepsilon} (X(1+|t|)^{1/2})^{1 + \varepsilon}.
\end{equation*}
\end{lemma}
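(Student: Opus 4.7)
The plan is to expand the square in \eqref{SXY}, detect the squarefree condition on $d$ via $\mu^2(d)=\sum_{\alpha^2\mid d}\mu(\alpha)$, and apply Lemma~\ref{lem2} to the inner sum over odd $b$ (writing $d=\alpha^2 b$). This recasts $S(X,Y;\Phi,W)$ as a dual expression of roughly the form
\[
S(X,Y;\Phi,W) = \frac{X}{2}\sum_{n,m}\frac{\Phi(n/Y)\Phi(m/Y)}{nm}\sum_{(\alpha,2nm)=1}\frac{\mu(\alpha)}{\alpha^2}\sum_{k}(-1)^k G_k(nm)\widetilde W\!\left(\frac{kX}{2\alpha^2 nm}\right),
\]
up to a $\leg{2}{nm}$ factor. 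The Gauss sums $G_k(nm)$ are then evaluated by Lemma~\ref{lem1}: the $k=0$ term and the $k=\pm\ell^2$ terms give essentially $\sqrt{nm}$ times a constant, while for $k=(-1)^\epsilon k_1 k_2^2$ with $k_1>1$ squarefree the normalised Gauss sum $G_k(nm)/\sqrt{nm}$ carries an oscillating quadratic character $\chi_{(-1)^\epsilon k_1}(nm)$.

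To extract the main terms I would follow the triple-contour-integral strategy of Soundararajan--Young \cite{S&Y}. Inserting Mellin representations of $\Phi(n/Y)$, $\Phi(m/Y)$ and $W(\alpha^2 b/X)$, the combined contribution of $k=0$ and $k=\pm\ell^2$ becomes a triple integral of a meromorphic kernel whose polar structure is essentially $\zeta(s+w)\zeta(2s)\zeta(2w)$, the factor $\zeta(s+w)$ reflecting the constraint that $nm$ (or $k_1 \cdot nm$) be a square after the parametrisation $n=ga^2$, $m=gb^2$ with $(a,b)=1$, and the two $\zeta(2\cdot)$'s coming from the $a,b$-summations. Shifting the $s$- and $w$-contours past the pole locus $s=w=1/2$ and then performing the $z$-integration against $\widehat W$, the resulting double residue assembles into the two main terms $C_1(\Phi,W)XY\log Y + C_2(\Phi,W)XY\log(Y^2/X)$; the $\log(Y^2/X)$ factor emerges precisely from the interplay between the $k=0$ diagonal and its dual $k=\pm\ell^2$ partner, since the admissible dual variables satisfy $|k|\alpha^2 \ll nm/X \ll Y^2/X$, making $Y^2/X$ the natural length-scale in the dual sum. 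The constants $C_1, C_2$ are then identified by matching the residue computation with \eqref{C1C2}.

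The off-diagonal terms with $k_1>1$ must be estimated, and here Lemma~\ref{lem:HB} is the central input. By Mellin inversion the inner sum over $n$ factors as $\sum_n \chi_{(-1)^\epsilon k_1}(n)\Phi(n/Y) F_k(n)$ for a smooth cutoff $F_k$ extracted from $\widetilde W$, so the sum over $(n,m)$ is the square of a single character sum, which one controls via a contour integral of $|L(s,\chi)|^2$; integrating against the second-moment bound of Lemma~\ref{lem:HB} provides a uniform handle on the average over real primitive $\chi$ of conductor $\asymp k_1$. Balancing this mean-square bound against a trivial Pólya--Vinogradov estimate across the admissible range $|k|\ll(Y^2/X)^{1+\epsilon}$ produces the three error terms in \eqref{S}: the $XY$ from the transition between ranges, the $XY(Y^2/X)^{-1/8+\epsilon}$ from a convexity-type interpolation in the intermediate range of $|k|$, and the $(XY)^{1+\epsilon}(Y/X)^{1/2}$ from the largest values of $|k|$ handled via Lemma~\ref{lem:HB} directly. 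The principal obstacle is the clean separation of the two main terms: individually the $k=0$ and $k=\pm\ell^2$ contributions each carry a $\log Y$-size residue with partially coincident coefficients, and it is only their combined double-residue in the $(s,w)$-plane that isolates the $\log(Y^2/X)$ dependence and pins down $C_2(\Phi,W)$ in the correct form.
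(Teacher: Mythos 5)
Your proposal does not address the statement you were asked to prove. Lemma~\ref{lem:HB} is a Heath--Brown-type large-sieve bound for real primitive characters, quoted in the paper directly from \cite[Lemma 2.5]{sound1} without proof. What you have written is instead a sketch of the proof of Theorem~\ref{meansquare}: you expand $S(X,Y;\Phi,W)$, apply the Poisson summation of Lemma~\ref{lem2}, analyse Gauss sums via Lemma~\ref{lem1}, and run a triple-contour-integral argument in the style of Soundararajan--Young. Indeed you explicitly describe Lemma~\ref{lem:HB} as ``the central input'' for controlling the off-diagonal terms --- so you are \emph{using} the lemma, not proving it, and a lemma cannot be used to establish itself.

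A genuine proof of Lemma~\ref{lem:HB} runs along entirely different lines and uses none of the machinery you invoke. One expresses $L(\sigma+it,\chi)$ via an approximate functional equation as a smoothly weighted Dirichlet polynomial of length roughly $(q(1+|t|))^{1/2}$, where $q\le Q$ is the conductor, and then applies Heath--Brown's quadratic large sieve,
\begin{equation*}
\sumstar_{d \le D}\; \Big| \sum_{n \le N} a_n \leg{d}{n} \Big|^2 \ll_{\varepsilon} (DN)^{\varepsilon}\,(D+N)\sum_{n\le N}|a_n|^2,
\end{equation*}
to average the squared polynomial over the family $S(Q)$. Dyadic decomposition in the conductor and in the Dirichlet-polynomial length, together with Cauchy--Schwarz on the tail, yields the stated $(Q(1+|t|)^{1/2})^{1+\varepsilon}$ bound. (As an aside, the $X$ appearing on the right-hand side of the lemma as printed in the paper is a typographical slip for $Q$.) Your write-up never introduces the approximate functional equation nor the quadratic large sieve, which are the two non-negotiable ingredients here, so it has no bearing on Lemma~\ref{lem:HB}.
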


\subsection{Analytical behaviors of some Dirichlet Series}
   Let $G_m(k)$ be defined as in \eqref{G}. Let $\epsilon \in \{ \pm \}$ and $k_1$ be square-free, we define
\begin{equation}
\label{eq:Z}
 Z_{\epsilon}(\alpha,\beta,\gamma;q,k_1) = \sum_{k_2=1}^{\infty} \sum_{(n_1,2q)=1} \sum_{(n_2,2q)=1} \frac{1}{n_1^{\alpha} n_2^{\beta} k_2^{2\gamma}} \frac{G_{\epsilon k_1 k_2^2}(n_1 n_2)}{n_1 n_2}.
\end{equation}
 We note that $Z_{\epsilon}(\alpha,\beta,\gamma;q,k_1)$ converges absolutely if Re$(\alpha)$, Re$(\beta)$, and Re$(\gamma)$ are
all $> \frac 12$.  In fact, we have the following analytical behavior of $Z_{\epsilon}$.
\begin{lemma}
\label{lemma:Z}
 The function $Z_{\epsilon}(\alpha,\beta,\gamma;q,k_1)$ defined above may be written
 as
 \begin{align*}
 L(\frac 12+\alpha, \chi_{\epsilon k_1})L(\frac 12+\beta,\chi_{\epsilon k_1})Z_{2, \epsilon}(\alpha,\beta,\gamma;q, k_1),
\end{align*}
 where $Z_{2, \epsilon}(\alpha,\beta,\gamma;q,k_1)$ is a function uniformly bounded by $(qk_1)^{\epsilon}$ for any $\epsilon>0$ in the region
$\text{\rm Re}(\gamma) \ge \frac 12+\varepsilon$, and $\text{\rm Re}(\alpha), \text{\rm Re}(\beta) \geq \epsilon$.
\end{lemma}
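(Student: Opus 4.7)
The plan is to decouple the outer sum over $k_2$ from the inner double sum over $n_1, n_2$, factor each inner sum against the $L$-functions of $\chi_m := \chi_{\epsilon k_1 k_2^2}$, and then sum in $k_2$ after pulling out the $k_2$-free factor $L(\tfrac12+\alpha, \chi_{\epsilon k_1}) L(\tfrac12+\beta, \chi_{\epsilon k_1})$. Writing
\begin{equation*}
Z_\epsilon(\alpha,\beta,\gamma;q,k_1) = \sum_{k_2=1}^\infty k_2^{-2\gamma} T_m(\alpha,\beta;q), \qquad T_m := \sum_{\substack{(n_1, 2q)=1 \\ (n_2, 2q)=1}} \frac{G_m(n_1 n_2)}{n_1^{1+\alpha} n_2^{1+\beta}},
\end{equation*}
with $m = \epsilon k_1 k_2^2$, the core step is the factorization $T_m = L(\tfrac12+\alpha, \chi_m) L(\tfrac12+\beta, \chi_m) A_m$ with $|A_m| \ll (qm)^{\epsilon}$ throughout the stated region.

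To prove this, I would use the multiplicativity of $G_m(\cdot)$ from Lemma~\ref{lem1} to write $T_m = \prod_{p \nmid 2q} T_{m,p}$ and evaluate each local factor
\[
T_{m,p} = \sum_{a,b \geq 0} \frac{G_m(p^{a+b})}{p^{(a+b)+a\alpha+b\beta}}
\]
via the prime-power formulas of Lemma~\ref{lem1}. For $p \nmid m$, only $a+b \leq 1$ contribute, so $T_{m,p} = 1 + X_p + Y_p$ with $X_p = \chi_m(p) p^{-1/2-\alpha}$, $Y_p = \chi_m(p) p^{-1/2-\beta}$; the algebraic identity
\[
(1+X+Y)(1-X)(1-Y) = 1 - X^2 - Y^2 - XY + X^2 Y + XY^2
\]
then shows that $T_{m,p} \cdot (1-X_p)(1-Y_p) = 1 + O(p^{-1-2\epsilon})$ in the region $\mathrm{Re}(\alpha), \mathrm{Re}(\beta) \geq \epsilon$, so the product over $p \nmid 2qm$ of these ratios converges absolutely and uniformly. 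For $p \mid m$ (at most $\omega(m)$ primes), $X_p = Y_p = 0$ and $T_{m,p}$ is a truncated $(a+b)$-sum bounded by $O_\epsilon(1)$ via geometric decay in $p^{-\epsilon}$, so their total contribution is $\leq m^\epsilon$. The primes $p \mid 2q$ are absent from the Euler product of $T_m$ but present in $L(\cdot,\chi_m)^2$; the compensating factor $\prod_{p \mid 2q} (1-X_p)(1-Y_p)$ is $O(q^\epsilon)$. Combining yields $|A_m| \ll (qm)^\epsilon$.

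For the second step, since $\chi_{\epsilon k_1 k_2^2}$ agrees with $\chi_{\epsilon k_1}$ at primes not dividing $k_2$ and vanishes at primes dividing $k_2$, we have
\[
L(s, \chi_m) = L(s, \chi_{\epsilon k_1}) \prod_{\substack{p \mid k_2 \\ p \nmid k_1}} (1 - \chi_{\epsilon k_1}(p) p^{-s}).
\]
Substituting into the $k_2$-sum and pulling $L(\tfrac12+\alpha, \chi_{\epsilon k_1}) L(\tfrac12+\beta, \chi_{\epsilon k_1})$ outside leaves
\[
Z_{2, \epsilon} = \sum_{k_2=1}^\infty \frac{A_m(\alpha, \beta; q)}{k_2^{2\gamma}} \prod_{\substack{p \mid k_2 \\ p \nmid k_1}} (1 - \chi_{\epsilon k_1}(p) p^{-1/2-\alpha}) (1 - \chi_{\epsilon k_1}(p) p^{-1/2-\beta}).
\]
The finite product is trivially bounded by $4^{\omega(k_2)} \ll k_2^\epsilon$, which combined with $|A_m| \ll (q k_1 k_2^2)^\epsilon$ gives
\[
|Z_{2, \epsilon}| \ll (qk_1)^\epsilon \sum_{k_2=1}^\infty k_2^{-2 \mathrm{Re}(\gamma) + 3\epsilon} \ll (qk_1)^\epsilon,
\]
the sum converging absolutely in $\mathrm{Re}(\gamma) \geq \tfrac12 + \varepsilon$ provided $\epsilon < \varepsilon/2$.

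The main obstacle is the uniform bound $|T_{m,p}| \ll_\epsilon 1$ at "bad" primes $p \mid m$ with $p \nmid 2q$: Lemma~\ref{lem1} gives different formulas depending on the parity of $v_p(m) = v_p(k_1) + 2 v_p(k_2)$, and the "character-producing" term (fourth row of Lemma~\ref{lem1}) evaluates to $\leg{m/p^{v_p(m)}}{p} p^{v_p(m)+1/2} = \chi_{\epsilon k_1}(p) p^{v_p(m)+1/2}$ rather than the vanishing $\chi_m(p)$, so one must check carefully that the bulk plus character contributions combine into a rational expression of modulus $\ll_\epsilon 1$ uniformly in $v_p(m)$. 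Once this bookkeeping is done, the rest of the argument is a routine absolute convergence estimate.
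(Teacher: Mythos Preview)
Your argument is correct, but it takes a different route from the paper's. The paper exploits the \emph{joint} multiplicativity of the summand in $n_1,n_2,k_2$ (via Lemma~\ref{lem1}) to write $Z_\epsilon$ directly as a single Euler product $\prod_p Z_{\epsilon,p}$, with each local factor a triple sum over exponents $(n_1,n_2,k_2)$. It then shows that for generic $p\nmid 2qk_1$ the terms with $k_2\ge 1$ contribute $O(p^{-1-2\varepsilon})$ while the $k_2=0$ term is $1+\chi_{\epsilon k_1}(p)(p^{-1/2-\alpha}+p^{-1/2-\beta})$; dividing by the two $L$-factors and controlling the finitely many primes $p\mid 2qk_1$ gives the bound. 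Your approach instead freezes $k_2$, factors the $(n_1,n_2)$-sum $T_m$ (with $m=\epsilon k_1 k_2^2$) against $L(\tfrac12+\alpha,\chi_m)L(\tfrac12+\beta,\chi_m)$, uses the imprimitivity relation to pull out $L(\tfrac12+\alpha,\chi_{\epsilon k_1})L(\tfrac12+\beta,\chi_{\epsilon k_1})$, and only then sums in $k_2$.

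Both arguments yield the stated bound; the trade-off is structural. The paper's route hands you $Z_{2,\epsilon}$ \emph{as an Euler product}, which is exactly the form consumed in Lemma~\ref{lemma:Z2} where individual Euler factors $Z_{2,1,p}$ are analyzed further. Your route gives $Z_{2,\epsilon}$ as a Dirichlet series in $k_2$ whose coefficients are themselves Euler products depending on $k_2$; this is fine for the present lemma but would need to be repackaged (essentially redone) to recover the prime-by-prime analysis of Lemma~\ref{lemma:Z2}. Your concern about the bad primes $p\mid m$ is not really an obstacle: the crude estimate $|G_m(p^c)|\le p^c$ gives $|T_{m,p}|\le \sum_{c\ge 0}(c+1)p^{-c\epsilon}\ll_\epsilon 1$, and then $C_\epsilon^{\omega(m)}\ll_\delta m^\delta$ suffices, with no need to track the parity cases explicitly.
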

\begin{proof}
It follows from Lemma \ref{lem1} that the summand of \eqref{eq:Z} is jointly multiplicative in terms of $n_1, n_2$, and $k_2$, so that $Z_{\epsilon}(\alpha,\beta,\gamma;q, k_1)$ can be
expressed as an Euler product over all primes $p$ with each Euler factor at $p$ being
\begin{align*}
 Z_{\epsilon,p}(\alpha,\beta,\gamma;q, k_1):=\sum_{k_2, n_1, n_2} \frac{1}{p^{n_1 \alpha + n_2 \beta +2k_2 \gamma}}\frac{ G_{\epsilon k_1 p^{2k_2}}(p^{n_1 + n_2})}{p^{n_1+n_2}  }.
\end{align*}
 Note that we have $ G_{\epsilon k_1 p^{2k_2}}(p^{n_1 + n_2}) \leq p^{n_1+n_2}$ by Lemma \ref{lem1}, it follows that we have $ Z_{\epsilon,p}(\alpha,\beta,\gamma;q, k_1) \ll 1$ uniformly for all $p$.

 To analyze $Z_{\epsilon,p}$, we consider the generic case when
$p \nmid 2q k_1$.  We first evaluate $G_{\epsilon k_1 p^{2k_2}}(p^{n_1 + n_2})$ explicitly using Lemma \ref{lem1}, then upon replacing $G_{\epsilon k_1 p^{2k_2}}(p^{n_1 + n_2})$ by these explicit expressions in the definition of $Z_{\epsilon,p}$ and keeping only the non-zero terms, we obtain an alternative expression for $Z_{\epsilon,p}$, and we denote this expression by $Z^{gen}_{\epsilon,p}(\alpha,\beta,\gamma;q, k_1)$. One checks that we have
\begin{equation*}
 Z^{gen}_{\epsilon,p}(\alpha,\beta,\gamma;q, k_1):=\sum_{k_2 \geq 0} \left ( \sum_{\substack{ n_1, n_2 \geq 0 \\ n_1+n_2 \equiv 0 \pmod 2 \\ n_1+n_2 \leq 2k_2}} \frac{1}{p^{n_1 \alpha + n_2 \beta +2k_2 \gamma}}\frac{ \varphi(p^{n_1 + n_2})}{p^{n_1+n_2}}+\sum_{\substack{ n_1, n_2 \geq 0 \\ n_1+n_2 = 2k_2+1}} \frac{1}{p^{n_1 \alpha + n_2 \beta +2k_2 \gamma}}\frac{\chi_{\epsilon k_1}(p)}{p^{1/2}  }  \right ).
\end{equation*}
 We now extend the above definition of $Z^{gen}_{\epsilon,p}(\alpha,\beta,\gamma;q, k_1)$ to all other $p$.

In the region Re$(\gamma) \ge \frac 12+\varepsilon$, Re$(\alpha)$, Re$(\beta)\ge \epsilon$, it follows from
the definition of $Z^{gen}_{\epsilon,p}$ that the contribution from terms $k_2\ge 1$ is of size
$\ll 1/p^{1+2\varepsilon}$. The contribution of the
term $k_2=0$ is $1+ \chi_{\epsilon k_1}(p) (p^{-\frac 12-\alpha}+p^{-\frac 12-\beta})$.

  We now define
\begin{align*}
  Z_{2, \epsilon}(\alpha,\beta,\gamma;q, k_1) =& \left ( L(\frac 12+\alpha, \chi_{\epsilon k_1})L(\frac 12+\beta,\chi_{\epsilon k_1})\right )^{-1}Z_{\epsilon}(\alpha,\beta,\gamma;q,k_1) \\
 =& Z^{gen}_{2, \epsilon}(\alpha,\beta,\gamma;q, k_1)Z^{non-gen}_{2, \epsilon}(\alpha,\beta,\gamma;q, k_1),
\end{align*}
  where
\begin{align*}
  Z^{gen}_{2, \epsilon}(\alpha,\beta,\gamma;q, k_1)=& \prod_p(1-\frac {\chi_{\epsilon k_1}(p)}{p^{\frac 12+\alpha}})(1-\frac {\chi_{\epsilon k_1}(p)}{p^{\frac 12+\beta}})Z^{gen}_{\epsilon,p}(\alpha,\beta,\gamma;q, k_1), \\
  Z^{non-gen}_{2, \epsilon}(\alpha,\beta,\gamma;q, k_1)=& \prod_{p | 2qk_1}(1-\frac {\chi_{\epsilon k_1}(p)}{p^{\frac 12+\alpha}})(1-\frac {\chi_{\epsilon k_1}(p)}{p^{\frac 12+\beta}})Z^{gen}_{\epsilon,p}(\alpha,\beta,\gamma;q, k_1)^{-1}Z_{\epsilon,p}(\alpha,\beta,\gamma;q, k_1).
\end{align*}
  Our arguments above imply that $Z^{gen}_{2, \epsilon}$ is uniformly bounded by $1$ in the region
$\text{\rm Re}(\gamma) \ge \frac 12+\varepsilon$, and $\text{\rm Re}(\alpha), \text{\rm Re}(\beta) \geq \epsilon$. As one checks easily that $Z^{non-gen}_{2, \epsilon}$ is uniformly bounded by $(qk_1)^{\epsilon}$ for any $\epsilon>0$ in the same region, the assertions of the lemma now follow.
\end{proof}

    Our next lemma concerns with the analytical behavior of $Z_{2, 1}$.
\begin{lemma}
\label{lemma:Z2}
   The function $Z_{2, 1}(\frac 12, \frac 12,\gamma;q, 1)$ defined in Lemma \ref{lemma:Z} may be written
 as
 \begin{align*}
 \zeta(2\gamma)\zeta(1+2\gamma)Z_{3}(\gamma;q),
\end{align*}
where $Z_{3}(\gamma;q)$ converges uniformly in the region
$\text{\rm Re}(\gamma) \ge -\frac 18+\varepsilon$ and satisfies $Z_{3}(\gamma;q), Z^{(i)}_{3}(0;q) \ll q^{\epsilon}$ for any $\epsilon>0$, $0 \leq i \leq 2$ in the same region.

  Let Re$(\alpha)=\epsilon>0$. The function $Z_{2, 1}(\alpha,\frac 12,\gamma;q, 1)$ defined in Lemma \ref{lemma:Z} may be written as
\begin{align}
 \label{Z4}
 \zeta(2\gamma)\zeta(2(\gamma+\alpha))Z_{4}(\gamma; \alpha, q),
\end{align}
where $Z_{4}(\gamma;\alpha, q)$ is a function uniformly bounded by $q^{\epsilon}$ for any $\epsilon>0$ in the region
$\text{\rm Re}(\gamma) \ge \frac 14+\varepsilon$.
\end{lemma}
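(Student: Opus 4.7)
My plan is to prove both assertions by computing the Euler product of $Z_{2,1}$ explicitly and factoring out the stated zeta factors prime by prime. Starting from the factorization
$Z_{2,1}(\alpha,\beta,\gamma;q,1) = Z^{gen}_{2,1}(\alpha,\beta,\gamma;q,1)\, Z^{non-gen}_{2,1}(\alpha,\beta,\gamma;q,1)$
established in the proof of Lemma \ref{lemma:Z}, I would first compute the closed form of the generic local factor $Z^{gen}_{1,p}(\alpha,\beta,\gamma;q,1)$ for $p \nmid 2q$ using Lemma \ref{lem1}.

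For part (a) with $\alpha=\beta=1/2$, Lemma \ref{lem1} implies that $G_{p^{2k_2}}(p^{n_1+n_2})$ is nonzero only when $n_1+n_2$ is even with $n_1+n_2 \leq 2k_2$ (value $\varphi(p^{n_1+n_2})$) or when $n_1+n_2 = 2k_2+1$ (value $p^{2k_2+1/2}$). Summing the resulting nested geometric series in $k_2$ and $n_1+n_2$ gives
$$Z^{gen}_{1,p}(\tfrac12,\tfrac12,\gamma;q,1) = \frac{1 + 2p^{-1} - p^{-2-2\gamma}}{(1-p^{-2\gamma})(1-p^{-1-2\gamma})}.$$
Multiplying by $(1-p^{-1})^2$ and extracting the Euler factors $(1-p^{-2\gamma})^{-1}(1-p^{-1-2\gamma})^{-1}$ of $\zeta(2\gamma)\zeta(1+2\gamma)$ yields the polynomial $Z_{3,p}(\gamma;q) = (1-p^{-1})^2(1+2p^{-1} - p^{-2-2\gamma})$, which satisfies $Z_{3,p}(\gamma;q)-1 = O(p^{-2}) + O(p^{-2-2\operatorname{Re}(\gamma)})$. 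Hence $\prod_{p \nmid 2q} Z_{3,p}$ converges absolutely and uniformly for $\operatorname{Re}(\gamma) > -1/2$, comfortably inside the stated region. Derivatives in $\gamma$ are controlled by logarithmic differentiation of the Euler product; each derivative only introduces an extra $\log p$ in each summand but convergence persists. At primes $p \mid 2q$ the non-generic local factor is $1 + O(1/p)$, so the full product over $p \mid 2q$ is bounded by $\exp(\sum_{p \mid q} O(1/p)) \ll (\log q)^C \ll q^\varepsilon$, giving the required bounds on $Z_3$ and $Z_3^{(i)}(0;q)$.

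For part (b) the computation runs parallel, with $\beta = 1/2$ and general $\alpha$ of small positive real part. Writing $A = p^{-\alpha}$, $U = p^{-2\gamma}$ and using the identity $\sum_{n_1+n_2=N} A^{n_1} p^{-n_2/2} = (A^{N+1} - p^{-(N+1)/2})/(A - p^{-1/2})$, the same three-case summation from Lemma \ref{lem1}, followed by the cancellation of an extraneous factor $(1 - p^{-1-2\gamma})$ between numerator and denominator, yields
$$Z^{gen}_{1,p}(\alpha,\tfrac12,\gamma;q,1) = \frac{1 + p^{-1} + Ap^{-1/2} - A^2 p^{-1} U}{(1-U)(1-A^2 U)}.$$
The denominator is precisely the product of the local factors of $\zeta(2\gamma)$ and $\zeta(2(\gamma+\alpha))$. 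Extracting these and absorbing the $L$-factor corrections $(1 - p^{-1/2-\alpha})(1-p^{-1})$ produces the explicit polynomial
$Z_{4,p}(\gamma;\alpha,q) = (1 - p^{-1/2-\alpha})(1-p^{-1})(1 + p^{-1} + p^{-1/2-\alpha} - p^{-1-2\alpha-2\gamma})$,
and a direct expansion shows $Z_{4,p} - 1$ is bounded by $O(p^{-1-2\operatorname{Re}(\alpha)}) + O(p^{-1-2\operatorname{Re}(\alpha)-2\operatorname{Re}(\gamma)}) + O(p^{-3/2-\operatorname{Re}(\alpha)})$. For $\operatorname{Re}(\alpha) = \varepsilon > 0$ and $\operatorname{Re}(\gamma) \geq 1/4 + \varepsilon$, each exponent exceeds $1$, so the Euler product converges absolutely; the $q^\varepsilon$ bound on $Z_4$ follows from the same $p \mid 2q$ analysis as before.

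The main technical obstacle is securing the correct denominator in each closed form. A priori the three nonzero cases of Lemma \ref{lem1} contribute three distinct geometric-series denominators, $(1-U)$, $(1-A^2 U)$, and $(1-p^{-1}U) = (1-p^{-1-2\gamma})$, and one must verify that the last factor divides the combined numerator cleanly so that no spurious $\zeta$-factor enters the statement and $Z_3, Z_4$ are genuine polynomials in the local variables. Once this cancellation is in place, convergence of the residual Euler product in the stated half-plane, together with the derivative and $q$-uniform bounds, is then routine.
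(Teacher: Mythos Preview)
Your approach is essentially the same as the paper's: both compute the local Euler factors of $Z_{2,1}$ via Lemma~\ref{lem1}, extract the stated zeta factors, and then verify that the residual product converges in the required half-plane, with the non-generic primes $p\mid 2q$ handled separately to produce the $q^{\varepsilon}$ bound. The difference is one of precision rather than strategy. The paper stops at an approximate expansion of the generic factor (keeping the terms $1/(1-p^{-2\gamma})+2/p+3(1-1/p)p^{-1-2\gamma}/(1-p^{-2\gamma})$ and bounding the tail by $O(p^{-3/2-4\gamma}+p^{-2-2\gamma})$), which is why it only reaches $\mathrm{Re}(\gamma)\ge -\tfrac18+\varepsilon$; your exact closed form $(1-p^{-1})^2(1+2p^{-1}-p^{-2-2\gamma})$ shows the true threshold is $\mathrm{Re}(\gamma)>-\tfrac12$, comfortably stronger. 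Likewise for part~(b), the paper only identifies the terms $p^{-2\gamma}+p^{-2\alpha-2\gamma}+O(p^{-1-2\varepsilon})$, whereas you sum the series completely and explicitly exhibit the cancellation of the spurious $(1-p^{-1-2\gamma})$ factor. Two small points to tighten: in your expansion of $Z_{4,p}-1$ there is also a $-p^{-2}$ term (harmless, but your listed bounds omit it); and your claim that the non-generic factor is $1+O(1/p)$ should be replaced by the observation that each such factor is bounded uniformly and there are only $\omega(2q)\ll \log q/\log\log q$ of them, which is what actually gives $\ll q^{\varepsilon}$.
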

\begin{proof}
  It follows from the proof of Lemma \ref{lemma:Z} that for either $\alpha=\frac 12$ or Re$(\alpha)=\epsilon>0$, $Z_{2,1}(\alpha, \frac 12,\gamma;q, 1)$ has an Euler product over all primes $p$ in the region
$\text{\rm Re}(\gamma) \ge \frac 12+\varepsilon$, with each Euler factor at $p$ being
\begin{align}
\label{Z21p}
 Z_{2,1,p}(\alpha, \frac 12,\gamma;q, 1):=(1-\frac 1{p^{1/2+\alpha}})(1-\frac 1{p})\sum_{k_2, n_1, n_2} \frac{1}{p^{n_1\alpha+ n_2/2 +2k_2 \gamma}}\frac{ G_{p^{2k_2}}(p^{n_1 + n_2})}{p^{n_1+n_2}  }.
\end{align}
  Note that we have $ G_{p^{2k_2}}(p^{n_1 + n_2}) \leq p^{n_1+n_2}$ by Lemma \ref{lem1}, it follows that we have $Z_{2,1,p}(\alpha, \frac 12,\gamma;q, 1) \ll 1$ uniformly for all $p$.

 We first consider the case $\alpha=\frac 12$. To analyze $Z_{2,1,p}$, we consider the generic case when
$p \nmid 2q$.  We first evaluate $G_{p^{2k_2}}(p^{n_1 + n_2})$ explicitly using Lemma \ref{lem1}, then upon replacing $G_{ p^{2k_2}}(p^{n_1 + n_2})$ by these explicit expressions in the definition of $Z_{2,1,p}$ and keeping only the non-zero terms, we obtain an alternative expression for $Z_{2,1,p}$, and we denote this expression by $Z^{gen}_{2,1,p}(\frac 12, \frac 12,\gamma;q, 1)$. One checks that
\begin{align*}
   & (1-\frac 1{p})^{-2}Z^{gen}_{2,1,p}(\frac 12, \frac 12,\gamma;q, 1)\\
   :=&\sum_{\substack{n_1+n_2=i \\n_1, n_2 \geq 0 }} \sum_{k_2 \geq 0} \frac{1}{p^{i/2 +2k_2 \gamma}}\frac{ G_{p^{2k_2}}(p^{i})}{p^{i}  } \\
   =& \frac {1}{1-p^{-2\gamma}}+\frac 2{p}+(1-\frac 1p)\frac {3p^{-1-2\gamma}}{1-p^{-2\gamma}}+\sum_{\substack{n_1+n_2=i \geq 3 \\n_1, n_2 \geq 0 }} \sum_{k_2 \geq 1} \frac{1}{p^{i/2 +2k_2 \gamma}}\frac{ G_{p^{2k_2}}(p^{i})}{p^{i}  } \\
    =& \frac {1}{1-p^{-2\gamma}}+\frac 2{p}+(1-\frac 1p)\frac {3p^{-1-2\gamma}}{1-p^{-2\gamma}}+(1-\frac 1p)\sum_{j \geq 2} \frac {(2j+1)p^{-j-2j\gamma}}{1-p^{-2\gamma}}+(1-\frac 1p)\sum_{l \geq 1} \frac {(2l+2)p^{-l-1/2-2(l+1)\gamma}}{1-p^{-2\gamma}}+\sum_{l' \geq 1} \frac {2l'+2}{p^{l'+1+2l'\gamma}} \\
   =& \frac {1}{1-p^{-2\gamma}}+\frac 2{p}+(1-\frac 1p)\frac {3p^{-1-2\gamma}}{1-p^{-2\gamma}}+O(\frac {p^{-3/2-4\gamma+\epsilon}+p^{-2-2\gamma+\epsilon}}{1-p^{-2\gamma}}).
\end{align*}
 We now extend the above definition of $Z^{gen}_{2,1,p}(\frac 12, \frac 12,\gamma;q, 1)$ to all other $p$.

  We now define
\begin{align*}
  Z_{3}(\gamma;q) =& \left ( \zeta(2\gamma)\zeta(1+2\gamma) \right )^{-1}Z_{2, 1}(\frac 12, \frac 12,\gamma;q, 1) \\
 =& Z^{gen}_{3}(\gamma;q)Z^{non-gen}_{3}(\gamma;q),
\end{align*}
  where
\begin{align*}
  Z^{gen}_{3}(\gamma;q)=& \prod_p(1-\frac {1}{p^{2\gamma}})(1-\frac {1}{p^{1+2\gamma}})Z^{gen}_{2,1,p}(\frac 12, \frac 12,\gamma;q, 1), \\
  Z^{non-gen}_{3}(\gamma;q)=& \prod_{p | 2q}(1-\frac {1}{p^{2\gamma}})(1-\frac {1}{p^{1+2\gamma}})Z^{gen}_{2,1,p}(\frac 12, \frac 12,\gamma;q, 1)^{-1}Z_{2,1,p}(\frac 12, \frac 12,\gamma;q, 1).
\end{align*}
  Our arguments above imply that $Z^{gen}_{3}$ is uniformly bounded by $1$ in the region
$\text{\rm Re}(\gamma) \ge -\frac 18+\varepsilon$. One checks easily that $Z^{non-gen}_{3}$ is uniformly bounded by $q^{\epsilon}$ for any $\epsilon>0$ in the same region, and the first two derivatives of $Z^{non-gen}_{3}$ at $\gamma=0$ satisfy the same bound. This proves the first part of the lemma.

   As the proof for the second part of the lemma is similar, we shall only sketch a proof for \eqref{Z4} here. For this, it suffices to analyze the contribution of an Euler factor given in \eqref{Z21p} for the generic case when $p \nmid 2q$.
  In the region Re$(\gamma) \ge \frac 14+\varepsilon$, Re$(\alpha)$, Re$(\beta)\ge \epsilon$, it follows from
Lemma \ref{lem1} that the contribution from terms $k_2\ge 2$ is of size
$\ll 1/p^{1+2\varepsilon}$.  It also follows from the proof of Lemma \ref{lemma:Z} that the contribution of the
term $k_2=0$ is  $1+ O(1/p^{1+2\varepsilon})$. This leaves the contribution of the
term $k_2=1$ which is non-zero only when $n_1+n_2=0, 2$ or $3$. It is ready to check that the contributions of the terms $n_2 \geq 1$ in these cases are $\ll 1/p^{1+2\varepsilon}$. We are thus led to consider only the cases $n_1=n_2=0, k_2=1$ or $n_1=2, n_2=0, k_2=1$ or $n_1=3,  n_2=0, k_2=1$. Using Lemma \ref{lem1}, we see that these terms contribute $\frac 1{p^{2\gamma}}+\frac 1{p^{2\alpha+2\gamma}}+O(\frac {1}{p^{1+2\varepsilon}})$ and \eqref{Z4} follows from this easily.

\end{proof}

\section{Proof of Theorem \ref{meansquare}}
\label{sec 3}

\subsection{Decomposition of $S(X,Y;\Phi, W)$}
\label{section:mainprop}

   We first recast $S(X,Y;\Phi, W)$ as
\[
S(h):= \sumstar_{d} \sum_{n_1} \sum_{n_2} \chi_{8d}(n_1n_2) h(d,n_1,n_2),
\]
where $h(x,y,z)=W(\frac xX)\Phi \left(\frac {y}{Y} \right )\Phi \left(\frac {z}{Y} \right )$ is a smooth function on ${\mr}_+^3$.
We now apply the M\"{o}bius inversion to remove the square-free condition on $d$. Thus we obtain, for an appropriate parameter $Z$ to be chosen later,
\begin{eqnarray*}
 S(h) &=& \Big(\sum_{\substack{a \leq Z \\ (a,2)=1}} + \sum_{\substack{a > Z \\ (a,2)=1}} \Big) \mu(a)  \sum_{(d,2)=1} \sum_{(n_1,a)=1} \sum_{(n_2,a)=1} \chi_{8d}(n_1n_2)h(da^2, n_1, n_2)\\
 &=&S_1(h)+ S_2(h).
\end{eqnarray*}

\subsection{Estimating $S_2(h)$}
\label{section:S2}
  In this section, we estimate $S_2(h)$. We write $d=b^2 \ell$ with $\ell$ being square-free, and
group terms according to $c=ab$ to see that
\begin{equation}
\label{eq:S21}
 S_2(h) = \sum_{(c,2)=1} \sum_{\substack{a > Z \\ a|c}} \mu(a)
 \sumstar_{\ell} \sum_{(n_1,c)=1} \sum_{(n_2,c)=1}
 \chi_{8\ell}(n_1 n_2) h(c^2 \ell, n_1, n_2).
\end{equation}
Consider the sum over $\ell$, $n_1$, and $n_2$ in \eqref{eq:S21}.
We apply Mellin transforms in the variables $n_1$ and $n_2$ to
see that this sum is
\begin{equation}
\label{eq:S22}
\frac{1}{(2\pi i)^2} \int_{(1+\varepsilon)} \int_{(1+\varepsilon)}
\sumstar_{\ell} {\check h}(c^2 \ell; u, v) \sum_{\substack{n_1, n_2 \\ (n_1n_2, c)=1}}
\frac{\chi_{8\ell}(n_1) \chi_{8\ell}(n_2) }{n_1^{u}n_2^{v} } du \, dv,
\end{equation}
where
\begin{equation*}
{\check h}(x;u,v) = \int_0^{\infty} \int_0^{\infty} h(x,y,z) y^u z^v \frac{dy}{y} \frac{dz}{z}.
\end{equation*}

  Integrating by parts we find that for Re$(u)$, Re$(v) >0$ and any integers $A_i \geq 0, 1 \leq i \leq 3$, we have
\begin{equation}
\label{eq:3.12}
{\check h}(x;u,v) \ll \left( 1 + \frac{x}{X} \right)^{-A_1} \frac{Y^{\text{\rm Re}(u)+\text{\rm Re}(v)}}{|uv|(1+|u|)^{A_2} (1 + |v|)^{A_3}}.
\end{equation}
The sum over $n_1$ and $n_2$ in \eqref{eq:S22} equals $L_c(u,\chi_{8\ell}) L_c(v, \chi_{8\ell})$ where $L_c$ is given by the Euler product defining $L(s,\chi_{8\ell})$
but omitting those primes dividing $c$.   We can then
move the lines of integration in \eqref{eq:S22}
to $\text{Re}(u)=\text{Re}(v)=1/2+1/\log X$ by noting that the Dirichlet $L$-functions have no poles. Then using \eqref{eq:3.12} with $A_2=A_3=1$ and $A_1$ large enough,  together
with
$$
|L_c(u, \chi_{8\ell})L_c(v,\chi_{8\ell})|
\le d(c)^2 ( |L(u, \chi_{8\ell})|^2 + |L(v, \chi_{8\ell})|^2),
$$
we conclude that \eqref{eq:S22} is bounded by
\begin{equation*}
d(c)^2 Y \int_{-\infty}^{\infty} (1+|t|)^{-2} \sumstar_{\ell} \left(1 + \frac{\ell c^2}{X}\right)^{-A_1}
|L(\tfrac 12+\tfrac 1{\log X} +it,  \chi_{8\ell})|^2  \ dt.
\end{equation*}
Now using Corollary \ref{eq:H-B} we conclude that the quantity in \eqref{eq:S22} is
$\ll d(c)^2 X^{1+\varepsilon}Y/c^2$, and using this estimate in \eqref{eq:S21} we obtain that
\begin{align}
\label{S2}
S_2(h) \ll X^{1 + \varepsilon}Y Z^{-1}.
\end{align}

\subsection{Estimating $S_1(h)$: the first main term}
  We evaluate $S_1(h)$ starting from this section. Letting $C = \cos$ and $S = \sin$. By applying Lemma \ref{lem2}, the Poisson summation formula, we obtain
\begin{align}
\label{eq:S1}
 S_1(h)= \frac{X}{2} \sum_{\substack{a \leq Z \\ (a,2)=1}} \frac{\mu(a)}{a^2} \sum_{k \in \mz}
\sum_{(n_1,2a)=1} \sum_{(n_2,2a)=1} \frac{(-1)^kG_k(n_1 n_2)}{n_1 n_2}\int_0^{\infty} h(xX, n_1, n_2) (C + S)\leg{2\pi k xX}{2n_1 n_2 a^2} dx.
\end{align}

   The first main contribution to $S_1(h)$ comes from the $k=0$ term in \eqref{eq:S1}, which
we call $S_{10}(h)$.
Note $G_0(m) = \phi(m)$ if $m = \square$ (a square), and is zero otherwise.  Also note that
$$
\sum_{\substack{a \leq Z \\ (a,2n_1n_2)=1}} \frac{\mu(a)}{a^2} = \frac{1}{\zeta(2)}
\prod_{p|2n_1n_2} \left( 1-\frac{1}{p^2}\right)^{-1} +O(Z^{-1}) = \frac{8}{\pi^2}\prod_{p|n_1n_2} \left(1-\frac{1}{p^2}\right)^{-1}+ O(Z^{-1}) .
$$
 We thus deduce via setting $h_1(y,z) = \int_0^{\infty} h(xX,y,z) dx$ that
\begin{align*}
 S_{10}(h) =  \frac{4X}{\pi^2}
\sum_{\substack{(n_1 n_2,2)=1 \\ n_1 n_2 = \square}} \prod_{p|n_1n_2} \left( \frac{p}{p+1}\right)
h_1\left( n_1, n_2\right)
 + O \Big( \frac XZ \sum_{\substack{(n_1 n_2,2)=1 \\ n_1 n_2 = \square}}
 |h_1(n_1,n_2)|\Big).
\end{align*}
 From our definition of $h$, it is readily seen that $h_1 \ll 1$ and $h_1=0$ unless both $n_1$ and $n_2$ are $\leq Y$. We then deduce that
\begin{align*}
 \sum_{\substack{(n_1 n_2,2)=1 \\ n_1 n_2 = \square}}
 |h_1(n_1,n_2)| \ll  \sum_{n \leq Y}d(n^2) \ll Y\log^2 Y,
\end{align*}
  where the last estimation above follows from \cite{Hooley}.

   It follows that
\begin{equation*}
S_{10}(h) =  \frac{4X}{\pi^2}
\sum_{\substack{(n_1 n_2,2)=1 \\ n_1 n_2 = \square}}  \prod_{p|n_1n_2} \left( \frac{p}{p+1}\right)
h_1\left( n_1, n_2\right) + O\left( \frac {XY\log^2 Y}Z \right).
\end{equation*}

  To analyze the first term above, we use Mellin transforms to see that
$$
h_1(n_1,n_2)  =
\frac{1}{(2\pi i)^2} \int_{(1)} \int_{(1)} \frac{Y^u Y^v}{n_1^u n_2^v}\widetilde{h}_1(u,v) du \, dv,
$$
where
\begin{equation*}
\widetilde{h}_1(u,v) = \int_{\mr_{+}^{3}} h_1(yY,zY) y^{u} z^{v} \frac{dy}{y} \frac{ dz}{ z}.
\end{equation*}

   Similar to \eqref{eq:3.12}, we have that for Re$(u)$, Re$(v) >0$ and any integers $B_i \geq 0, 1 \leq i \leq 2$,
\begin{equation}
\label{h1bound}
 \widetilde{h}_1(u,v) \ll  \frac{1}{|uv|(1+|u|)^{B_1} (1 + |v|)^{B_2}}.
\end{equation}

 By setting
$$
Z(u,v)  = \sum_{\substack{(n_1 n_2,2)=1 \\ n_1 n_2 = \square}} \frac{1}{n_1^{u} n_2^{v}}\prod_{p|n_1 n_2} \left( \frac{p}{p+1}\right),
$$
we see that
\begin{equation}
\label{eqn:4.4}
S_{10}(h)= \frac{4X}{\pi^2} \frac{1}{(2\pi i)^2} \int_{(1)} \int_{(1)} Y^u Y^v \widetilde{h}_1(u,v) Z(u,v)
dv \ du + O\left( \frac {XY\log^2 Y}Z  \right).
\end{equation}

 A simple calculation shows that $Z(u,v)$ equals
 \begin{align*}
  & \prod_{p >2} \left( 1+ \frac p{p+1} \sum_{\substack{(n_1,n_2) \neq (0,0) \\ n_1+n_2 \equiv 0 \pmod 2}} \frac 1{p^{n_1u+n_2v}} \right) \\
  =& \prod_{p >2} \left( 1+ \frac p{p+1} \sum_{\substack{n_1 \text{odd} \\ n_1 \geq 1}}\sum_{\substack{n_2 \text{odd} \\ n_2 \geq 1}} \frac 1{p^{n_1u+n_2v}}+ \frac p{p+1} \sum_{\substack{n_1 \text{even} \\ n_1 \geq 0}}\sum_{\substack{n_2 \text{even} \\ n_2 \geq 0}} \frac 1{p^{n_1u+n_2v}}-\frac p{p+1} \right) \nonumber \\
  =& \prod_{p >2} \left( 1+ \frac p{p+1} \frac 1{p^{u+v}}(1-\frac 1{p^{2u}})^{-1}(1-\frac 1{p^{2v}})^{-1}+ \frac p{p+1} (1-\frac 1{p^{2u}})^{-1}(1-\frac 1{p^{2v}})^{-1}-\frac p{p+1} \right).  \nonumber
 \end{align*}
The Euler product above converges absolutely when Re$(u)$ and Re$(v)$ are both $\geq \frac 12+\epsilon$ for any $\epsilon>0$.
We write
\begin{equation*}
  Z(u,v)=\zeta(2u)\zeta(2v)\zeta(u+v)  Z_2(u,v),
\end{equation*}
where $Z_2(u,v)$ converges absolutely in the region Re$(u)$ and Re$(v)$ larger than $\frac 14+\varepsilon$ and is uniformly bounded there.

 We now use these observations to evaluate the double integral in \eqref{eqn:4.4}.  First
 we move the line of integration in $v$ to Re$(v) = \frac 14+\epsilon$.
 In doing so we encounter a simple pole at $v=1/2$ whose residue contributes
 $$
 \frac{2XY^{1/2}}{\pi^2} \frac{1}{2\pi i} \int_{(1)}  Y^u  \widetilde{h}_1(u,\frac 12) \zeta(2u)\zeta(u+\frac 12)  Z_2(u,\frac 12) du.
 $$
  We now move the line of the above integration in $u$ to Re$(u)=\frac 14+\epsilon$, encountering
a double pole at $u=\frac 12$, and the contribution of the residue of the double pole at $u=0$ is easily seen to be
$$
\frac{XY\log Y}{\pi^2} \widetilde{h}_1(\frac 12,\frac 12)  Z_2(\frac 12,\frac 12) + O(XY).
$$

  To estimate the integral on the $\frac 14+\epsilon$ line, we apply the functional equation for the Riemann Zeta function (\cite[\S 8]{Da}) and Stirling's formula, together with the convex bound for $\zeta(s)$ to see that
\begin{align}
\label{zetabound}
   \zeta(s) \ll \begin{cases}
   1 \qquad & \text{Re$(s) >1$},\\
   (1+|s|)^{\frac {1-\text{Re}(s)}{2}} \qquad & 0< \text{Re$(s) <1$},\\
    (1+|s|)^{\frac 12-\text{Re}(s)} \qquad & \text{Re$(s) \leq 0$}.
\end{cases}
\end{align}

   Applying this and \eqref{h1bound} with $B_1=1, B_2=0$ implies that the integral on the $\frac 14+\epsilon$ line contributes $\ll XY^{\frac 34+\varepsilon}$.

 Lastly, we consider the contribution of
the integral on the $\frac 14+\epsilon$ line of $v$, namely
$$
\frac{4X}{\pi^2} \frac{1}{(2\pi i)^2}  \int_{(\frac 14+\epsilon)} \int_{(1)} Y^u Y^v \widetilde{h}_1(u,v) \zeta(2u)\zeta(2v)\zeta(u+v)  Z_2(u,v)
du \ dv.
$$
 We move the line of integration in $u$ to Re$(u) = \frac 14+\epsilon$.
 In doing so we encounter simple poles at $u=1/2$ and $u=1-v$.

  Keeping \eqref{zetabound} in mind, we now estimate contributions from the poles and integrations in the above process, using \eqref{h1bound}. By taking $B_1=0, B_2=1$ in \eqref{h1bound}, we see that the residue at $u=1/2$ contributes $\ll XY^{\frac 34+\varepsilon}$. By taking $B_1=B_2=0$ in \eqref{h1bound}, so that $\widetilde{h}_1(1-v,v) \ll (|1-v||v|)^{-1}$,  we see that the residues at $u=1-v$ contribute $\ll XY$. By taking $B_1=B_2=1$ in \eqref{h1bound}, we see that the integral on the $\frac 14+\epsilon$ line contributes $\ll XY^{\frac 12+2\varepsilon}$.

Using these estimations in \eqref{eqn:4.4},
we obtain that
\begin{align}
\label{Aequals}
 S_{10}(h) = \frac{XY\log Y}{\pi^2} \widetilde{h}_1(\frac 12,\frac 12)  Z_2(\frac 12,\frac 12)  + O\left( XY+\frac {XY\log^2 Y}Z \right).
\end{align}

\subsection{Estimating $S_1(h)$: the $k \neq 0$ terms}
\label{section:3.3}
We now estimate the contribution to $S_1(h)$ from the terms  $k \neq 0$ in \eqref{eq:S1} and we call
this contribution $S_3(h)$. For any smooth function $f$ on $\mr_+$ with
rapid decay at infinity such that $f$ and all its derivatives have a finite limit as $x\to 0^+$, we consider the Fourier-like transform
\begin{equation*}
 \widehat{f}_{CS}(y) := \int_0^{\infty} f(x) CS(2\pi xy) dx,
\end{equation*}
where $CS$ stands for either $\cos$ or $\sin$.  It is shown in \cite[Sec. 3.3]{S&Y} that
\begin{equation*}
 \widehat{f}_{CS}(y)  = \frac{1}{2\pi i} \int_{(\half)} \widetilde{f}(1-s) \Gamma(s) CS\left(\frac{\text{sgn}(y) \pi s}{2}\right) (2\pi |y|)^{-s} ds.
\end{equation*}

Applying this formula, we have 
\begin{align}
\label{eq:3.30}
\begin{split}
 & \int_0^{\infty} h \left(Xx, n_1, n_2 \right) (C + S)\leg{2\pi k xX}{2n_1 n_2 a^2} dx
\\
= & \frac{X^{-1}}{2\pi i}  \int_{(\varepsilon)} \check{h}\left(1-s; n_1, n_2 \right) \leg{n_1 n_2 a^2}{\pi |k| }^{s} \Gamma(s) (C + \text{sgn}(k)S)\left(\frac{\pi s}{2} \right) ds,
\end{split}
\end{align}
where
\begin{equation*}
\label{eq:3.31}
 \check{h}(s;y,z) = \int_0^{\infty} h(x,y,z) x^s \frac{dx}{x}.
\end{equation*}

Taking the Mellin transforms in the other variables on the second line of \eqref{eq:3.30}, we get
\begin{equation*}
\frac 1X \leg{1}{2\pi i}^3  \int_{(1)} \int_{(1)} \int_{(\varepsilon)} \widetilde{h}\left(1-s,u,v \right) \frac{1}{n_1^{u} n_2^{v}} \leg{n_1 n_2 a^2}{\pi |k| }^{s} \Gamma(s) (C + \text{sgn}(k)S)\left(\frac{\pi s}{2} \right) ds\, du \, dv,
\end{equation*}
  where
\begin{equation*}
\widetilde{h}(s,u,v) = \int_{\mr_{+}^{3}} h(x,y,z) x^{s} y^{u} z^{v} \frac{dx}{x} \frac{dy}{y} \frac{ dz}{ z}.
\end{equation*}

  Integrating by parts we find that for Re$(s)$, Re$(u)$, Re$(v) >0$ and any integers $E_i \geq 0, 1 \leq i \leq 3$,
\begin{equation}
\label{eq:h}
|\widetilde {h}(s,u,v)| \ll \frac{X^{\text{\rm Re}(s)} Y^{\text{\rm Re}(u)+\text{\rm Re}(v)}}{|uvs| (1+|s|)^{E_1} (1+|u|)^{E_2} (1 + |v|)^{E_3}}.
\end{equation}

Using this expression in \eqref{eq:S1}, with the observation that $G_k(m) =G_{4k}(m)$ for odd $m$,
we see that
\begin{multline*}
 S_3(h) = \frac{1}{2} \sum_{\substack{a \leq Z \\ (a,2)=1}} \frac{\mu(a)}{a^2} \sum_{k \neq 0}
\sum_{(n_1,2a)=1} \sum_{(n_2,2a)=1}
\frac{(-1)^kG_{4k}(n_1 n_2)}{n_1 n_2}
\\
\leg{1}{2\pi i}^3  \int_{(1)} \int_{(1)} \int_{(\varepsilon)} \widetilde{h}\left(1-s,u,v \right) \frac{1}{n_1^{u} n_2^{v}} \leg{n_1 n_2 a^2}{\pi |k|  }^{s} \Gamma(s) (C + \text{sgn}(k)S)\left(\frac{\pi s}{2} \right) ds \, du \, dv.
\end{multline*}

  Suppose $\epsilon \in \{ \pm \}$ is the sign of $k$.  Then we have $S_3(h)=S_3^{+}(h) + S_3^{-}(h)$, where
\begin{multline*}
 S_3^{\epsilon}(h) = \frac{1}{2} \sum_{\substack{a \leq Z \\ (a,2)=1}} \frac{\mu(a)}{a^2} \leg{1}{2\pi i}^3  \int_{(1)} \int_{(1)} \int_{(\varepsilon)} \widetilde{h}\left(1-s,u,v \right) \\
\times \sum_{(n_1,2a)=1} \sum_{(n_2,2a)=1}\frac{1}{n_1^{1+u} n_2^{1+v}} \leg{n_1 n_2 a^2}{\pi  }^{s} \Gamma(s) (C + \epsilon S)\left(\frac{\pi s}{2} \right)
\sum_{k \geq 1}\frac{(-1)^kG_{\epsilon 4k}(n_1 n_2)}{k^{s}}   ds \, du \, dv.
\end{multline*}
We write $4k=k_1k_2^2$ where $k_1$ is a fundamental discriminant, and $k_2$ is
positive, so that the sum over $k$ above is a sum over $k_1$ and $k_2$.  We now apply formula \cite[(5.15)]{Young2} by identifying $f(k)=G_{\epsilon 4k}(n_1 n_2)k^{-s}$ in our case such that $f(4k)=4^{-s}f(k)$ to conclude that
\begin{align}
\label{S3e}
 S_3^{\epsilon}(h) = \frac{1}{2} \sum_{\substack{a \leq Z \\ (a,2)=1}} \frac{\mu(a)}{a^2}\left (\sumflat_{k_1 \text{ odd}} \mathcal{M}_{1, \epsilon}(s,u, v, k_1,a) + \sumflat_{k_1 \text{ even}} \mathcal{M}_{2, \epsilon}(s,u, v, k_1,a) \right ),
\end{align}
  where we use $\sumflat$ to denote a sum over fundamental discriminants and
\begin{multline}
\label{eq:mathcalM1}
 \mathcal{M}_{1, \epsilon}(s,u, v, k_1, a)=
 \leg{1}{2\pi i}^3  \int_{(1)} \int_{(1)} \int_{(\varepsilon)} \widetilde{h}\left(1-s,u,v \right) \\
\times \sum_{(n_1,2a)=1} \sum_{(n_2,2a)=1}\frac{2^{1-2s}-1}{n_1^{1+u} n_2^{1+v}} \leg{n_1 n_2 a^2}{\pi k_1  }^{s} \Gamma(s) (C + \epsilon S)\left(\frac{\pi s}{2} \right)
\sum_{k_2 \geq 1}\frac{G_{\epsilon k_1 k_2^2}(n_1 n_2)}{k^{2s}_2}   ds \, du \, dv.
\end{multline}
  The formula for $\mathcal{M}_{2, \epsilon}(s,u,k_1,l)$ is identical to \eqref{eq:mathcalM1} except that the factor $2^{1-2s} - 1$ is omitted.

  Note that the integral over $s$ in \eqref{eq:mathcalM1} may be taken over any vertical
lines with real part between $0$ and $1$ and the integrals over $u,v$ in \eqref{eq:mathcalM1} may be taken over any vertical
lines with real part large than $1$.  Therefore taking the integrals in \eqref{eq:mathcalM1} to be on the lines
Re$(s)= \frac 12+ \varepsilon$ and ${\text {Re}}(u)={\text {Re}}(v) = 1+2\varepsilon$
we find that
\begin{multline*}
\mathcal{M}_{1, \epsilon}(s,u, v, k_1, a)=  \left(\frac{1}{2\pi i}\right)^3  \int_{(1+2\varepsilon)} \int_{(1+2\varepsilon)}
\int_{(\frac 12+\varepsilon)} {\widetilde h}(1-s,u,v) (2^{1-2s}-1) \Gamma(s)
\\
\hskip 1 in \times   (C+ \epsilon S) \left( \frac{\pi s}{2}\right)
 \left(\frac{a^2}{\pi k_1}\right)^s
Z_{\epsilon}(u-s, v-s, s;a,k_1) ds \, du \, dv,
\end{multline*}
   where $Z_{\epsilon}$ is defined in \eqref{eq:Z}.

Changing variables we conclude that
\begin{multline*}
\mathcal{M}_{1, \epsilon}(s,u, v, k_1, a)= \left(\frac{1}{2\pi i}\right)^3  \int_{(\frac 12+\varepsilon)} \int_{(\frac 12+\varepsilon)}
\int_{(\frac 12+\varepsilon)} {\widetilde h}(1-s,u+s,v+s) (2^{1-2s}-1) \Gamma(s)
\\
\hskip 1in \times   (C+ \epsilon S) \left( \frac{\pi s}{2}\right)
 \left(\frac{a^2}{\pi k_1}\right)^s
Z_{\epsilon}(u, v, s;a,k_1) ds \, du \, dv.
\end{multline*}

We now return to the evaluation of \eqref{S3e}. We apply Lemma \ref{lemma:Z} to write $Z_{\epsilon}(u, v, s;a,k_1)$ as $L(\frac 12+u, \chi_{\epsilon k_1})L(\frac 12+v,\chi_{\epsilon k_1})Z_{2, \epsilon}(u,v,s;a, k_1)$.  We first move the line of integration over $u$ to Re$(u)=\epsilon$. By doing so, we cross a pole of the Dirichlet $L$-functions at $u =\frac 12$ for $\epsilon= k_1 = 1$ only. We denote the possible residue by $R$ and the remaining integrals by $I$. To treat $R$, we further move the line of integration over $v$ to Re$(v)=\epsilon$. By doing so, we cross another pole of the Dirichlet $L$-functions at $v =\frac 12$ for $\epsilon= k_1 = 1$ only.  We denote the possible residue by $R_1$ and the remaining integrals by $I_1$.  Similarly, to treat $I$, we move the line of integration over $v$ to Re$(v)=\epsilon$. By doing so, we cross another pole of the Dirichlet $L$-functions at $v =\frac 12$ for $\epsilon= k_1 = 1$ only.  We denote the possible residue by $R_2$ and the remaining integrals by $I_2$.

   We treat $R_1$ first. By Lemma \ref{lemma:Z}, we have
\begin{align*}
R_1 = & \frac 12 \sum_{\substack{a\le Z \\ (a,2)=1}} \frac{\mu(a)}{a^{2}}  \\
& \times \leg{1}{2\pi i} \int_{(\frac 12+\epsilon)}{\widetilde h}(1-s,\frac 12+s,\frac 12+s) (2^{1-2s}-1) \Gamma(s)  (C+  S) \left( \frac{\pi s}{2}\right)
 \left(\frac{a^2}{\pi }\right)^s Z_{2, 1}(\frac 12,\frac 12,s;a, 1)ds . \nonumber
\end{align*}
   Now we apply Lemma \ref{lemma:Z2} to write $Z_{2, 1}(\frac 12,\frac 12,s;a, 1)$ as $\zeta(2s)\zeta(1+2s)Z_3(s;a)$ and we further move the line of integration in the above expression to Re$(s)=-\frac 18+\epsilon$. By doing so, we cross a simple pole at $s =\frac 12$ and a double pole at $s=0$ due to the simple poles of $\zeta(1+2s)$ and $\Gamma(s)$. We denote the residues by $R_{null}$ and $R_0$, respectively. We also denote the remaining integral by $I_3$.
   It is easy to see that $R_{null}=0$ due to the presence of the factor $2^{1-2s}-1$. To compute $R_0$, we note that the residues of $\zeta(1+2s)$ and $\Gamma(s)$ at $s=0$ are $1/2$ and $1$, respectively (see \cite[\S 10 (2)]{Da} for the residue of $\Gamma(s)$ at $s=0$).  Keeping in mind that $Z^{(i)}_3(0;a) \ll a^{\epsilon}, 0 \leq i \leq 2$, we deduce that
\begin{align}
\label{R0}
\begin{split}
 R_0=& \frac {XY}4\log (\frac {Y^2}{X})\zeta(0)\left (\int_{\mr_{+}^{3}} h(xX,yY,zY)y^{-1/2}z^{-1/2} dx dy dz \right ) \sum_{\substack{a\le Z \\ (a,2)=1}} \frac{\mu(a)Z_3(0;a)}{a^{2}} +O(XY) \\
  =& \frac {XY}4\log (\frac {Y^2}{X})\zeta(0)\widetilde{h}_1(\frac 12,\frac 12) \sum_{(a,2)=1} \frac{\mu(a)Z_3(0;a)}{a^{2}} +O(\frac {XY\log (\frac {Y^2}{X})}{Z^{1-\epsilon}}+XY).
\end{split}
\end{align}

   To estimate $I_3$, we apply \eqref{zetabound} and \eqref{eq:h} with $E_1=E_2=E_3=0$, the observation that $Z_3(s;a) \ll a^{\epsilon}$ for any $\epsilon>0$ by Lemma \ref{lemma:Z2} and the bound
\begin{align}
\label{gammabound}
  |\Gamma(s) (C+\epsilon S)(\pi s/2)| \ll |s|^{\text{Re}(s)-\frac 12}
\end{align}
  to see that
\begin{align}
\label{I3}
  I_3 \ll XY(\frac {Y^2}{X} )^{-\frac 18+\epsilon}\int_{(-\frac 18+\epsilon)}\frac{ |s|^{\text{Re}(s)-\frac 12}(1+|s|)^{1/2-\text{Re}(2s)}(1+|s|)^{(1-\text{Re}(1+2s))/2}ds}{|1-s||\frac 12+s||\frac 12+s|} \ll XY(\frac {Y^2}{X} )^{-\frac 18+\epsilon}.
\end{align}

   Next, we treat $R_2$. By Lemma \ref{lemma:Z} again, we see that the contribution of $R_2$ is
\begin{align*}
& \frac 12 \sum_{\substack{a\le Z \\ (a,2)=1}} \frac{\mu(a)}{a^{2}} \int_{(\epsilon)}\int_{(\frac 12+\epsilon)}{\widetilde h}(1-s,u+s,\frac 12+s) (2^{1-2s}-1) \Gamma(s)  (C+  S) \left( \frac{\pi s}{2}\right)
 \left(\frac{a^2}{\pi }\right)^s \zeta(\frac 12+u) Z_{2, 1}(u,\frac 12,s;a, 1)ds du.
\end{align*}
  Writing $Z_{2, 1}(u,\frac 12,s;a, 1)=\zeta(2s)\zeta(2(s+u))Z_{4}(s; u, a)$ by Lemma \ref{lemma:Z2}, we now move the line of integration over $s$ in the above expression to Re$(s)=\frac 14+\epsilon$. By doing so, we cross simple poles at $s=\frac 12$ and $s =\frac 12-u$.  We apply \eqref{zetabound} and \eqref{eq:h} with $E_1=E_3=0, E_2=1$, the observation that $Z_{4}(u; s, a)  \ll a^{\epsilon}$ for any $\epsilon>0$ by Lemma \ref{lemma:Z2} and \eqref{gammabound} to see that the residue at $s=\frac 12$ contributes
\begin{align}
\label{R2res1}
  \ll XY^{1+\epsilon} \left ( \frac {Y}{X}\right )^{1/2}Z^{\epsilon}\int_{(\epsilon)}\frac{du}{|\frac 12+u||1+u|}
  \ll  XY^{1+\epsilon} \left ( \frac {Y}{X}\right )^{1/2}Z^{\epsilon}.
\end{align}

  Similarly, we apply \eqref{zetabound} and \eqref{eq:h} with $E_1=E_2=E_3=0$, the observation that $Z_{4}(u; s, a)  \ll a^{\epsilon}$ for any $\epsilon>0$ by Lemma \ref{lemma:Z2} and \eqref{gammabound} to see that the residue at $s=\frac 12$ contributes
\begin{align}
\label{R2res}
  \ll XY \left ( \frac {Y}{X}\right )^{1/2-\epsilon}\int_{(\epsilon)}\frac{ |1/2-u|^{-\text{Re}(u)}(1+|u|)^{1/4+\epsilon}du}{|\frac 12+u||1-u|}
  \ll XY \left ( \frac {Y}{X}\right )^{1/2-\epsilon}.
\end{align}

  Now, we treat the remaining integral at Re$(u)=\epsilon$ and Re$(s)=\frac 14+\epsilon$. We see that this is
\begin{align*}
& \ll \sum_{a\le Z} \frac{1}{a^{2}} \int_{(\frac 14+\epsilon)}\int_{(\epsilon)}{\widetilde h}(1-s,u+s,\frac 12+s) (2^{1-2s}-1) \Gamma(s)  (C+  S) \left( \frac{\pi s}{2}\right)
 \left(\frac{a^2}{\pi }\right)^s \zeta(\frac 12+u) Z_{2, 1}(u,\frac 12,s;a, 1)du ds \\
& \ll  \sum_{a\le Z} \frac{1}{a^{2}}  \int_{(\frac 14+\epsilon)} \int_{(\frac 14+2\epsilon)}{\widetilde h}(1-s,u,\frac 12+s) (2^{1-2s}-1) \Gamma(s)  (C+  S) \left( \frac{\pi s}{2}\right)
 \left(\frac{a^2}{\pi }\right)^s \zeta(\frac 12+u-s) Z_{2, 1}(u-s,\frac 12,s;a, 1)du ds  \\
& \ll  \sum_{a\le Z} \frac{1}{a^{2}}   \int_{(\frac 14+\epsilon)}\int_{(\frac 14+2\epsilon)}{\widetilde h}(1-s,u,\frac 12+s) (2^{1-2s}-1) \Gamma(s)  (C+  S) \left( \frac{\pi s}{2}\right)
 \left(\frac{a^2}{\pi }\right)^s \zeta(\frac 12+u-s) \zeta(2u)\zeta(2s)Z_{4}(s;u-s, a)du ds.
\end{align*}
   We apply \eqref{zetabound} and \eqref{eq:h} with $E_1=E_3=0, E_2=1$, the observation that $Z_{4}(s; u-s, a) \ll a^{\epsilon}$ for any $\epsilon>0$ by Lemma \ref{lemma:Z2} and \eqref{gammabound} to see that the last expression above is
\begin{align}
\label{R2int}
\begin{split}
  \ll  X^{3/4-\epsilon}Y^{1+3\epsilon} \int_{(\frac 14+\epsilon)}\int_{(\frac 14+2\epsilon)}\frac{|s|^{\text{Re}(s)-\frac 12}(1+|u-s|)^{1/4+\epsilon}(1+|u|)^{1/4+\epsilon}(1+|s|)^{1/4+\epsilon}du ds}{|1-s||\frac 12+s||u||1+u|}  \ll X^{3/4-\epsilon}Y^{1+3\epsilon} .
\end{split}
\end{align}

   Combining \eqref{R2res1}, \eqref{R2res} and \eqref{R2int}, we see that
\begin{align}
\label{R2}
  R_2 \ll  XY^{1+\epsilon} \left ( \frac {Y}{X}\right )^{1/2}Z^{\epsilon}+XY \left ( \frac {Y}{X}\right )^{1/2-\epsilon}+X^{3/4-\epsilon}Y^{1+3\epsilon}.
\end{align}

   By symmetry, we conclude that we also have
\begin{align}
\label{I1}
  I_1 \ll   XY^{1+\epsilon} \left ( \frac {Y}{X}\right )^{1/2}Z^{\epsilon}+XY \left ( \frac {Y}{X}\right )^{1/2-\epsilon}+X^{3/4-\epsilon}Y^{1+3\epsilon}.
\end{align}

   Lastly, we estimate the contribution of $I_2$ to \eqref{S3e}. We split that sum over $k_1$ into
two terms based on whether $k_1 \le K$ or not, for a suitable $K$ to be chosen later.
For the first category of terms we move the lines of integration to Re$(s)= c_1$ for some $1/2<c_1<1$,
Re$(u)=\text{Re}(v)=\epsilon$, and for the second category we move
the lines of integration to Re$(s)=c_2$ for some $c_2>1$, Re$(u)=\text{Re}(v)=\epsilon$.
we find by Lemma \ref{lemma:Z} that for any $\epsilon>0$,
$$
Z_{\epsilon}(u, v,s;a,k_1)
\ll (ak_1)^{\epsilon}|L(\tfrac 12+u, \chi_{\epsilon k_1}) L(\tfrac 12+v, \chi_{\epsilon k_1})|
$$
which is
\begin{equation}
\label{eq:Zbound}
\ll (ak_1)^{\epsilon}  \left(|L(\tfrac 12+u, \chi_{\epsilon k_1})|^2 +
|L(\tfrac 12+v, \chi_{\epsilon k_1})|^2\right).
\end{equation}

Using \eqref{eq:h} with $E_1=E_2=E_3=1$ and \eqref{gammabound}, together with \eqref{eq:Zbound} by noting the symmetry in $u$ and $v$,
we find that our first category of terms
contributes
\begin{multline}
\label{eq:firstbd}
\ll X^{1-c_1} Y^{2c_1+2\epsilon} \sum_{a\le Z} \frac{1}{a^{2-2c_1-\epsilon}}   \int_{(c_1)} \int_{(\epsilon)}\int_{(\epsilon)} \\
\times \sumflat_{k_1\le K}\frac{1}{k_1^{c_1-\epsilon}} |L(\frac 12+u, \chi_{\epsilon k_1})|^2 \frac{ |s|^{\text{Re}(s)-\frac 12} du \, dv \, ds}{|1-s|(1+|1-s|)|u+s|(1+|u+s|)|v+s|(1+|v+s|)}.
\end{multline}

  Applying Lemma \ref{eq:H-B} and partial summation, we see that
\begin{align*}
 \sumstar_{k_1\le K}\frac{1}{k_1^{c_1-\epsilon}} |L(\frac 12+u, \chi_{\epsilon k_1})|^2 \ll K^{1-c_1+2\epsilon}(1+|\text{Im}(u)|)^{1/2+\epsilon} \ll K^{1-c_1+2\epsilon}\left ((1+|u+s|)^{1/2+\epsilon}+|s|^{1/2+\epsilon} \right ).
\end{align*}

  Using the above bound in \eqref{eq:firstbd}, we deduce that the first category of terms
contributes
\begin{align}
\label{eq:firstbd1}
\ll X^{1-c_1} Y^{2c_1+2\epsilon} K^{1-c_1+2\epsilon}Z^{2c_1-1+2\epsilon}.
\end{align}

  Similarly, the contribution of the second category of terms is
\begin{align}
\label{eq:secbd}
\ll X^{1-c_2} Y^{2c_2+2\epsilon} K^{1-c_2+2\epsilon}Z^{2c_2-1+2\epsilon}.
\end{align}
  We now take $K=Y^2Z^2/X$ so that
\begin{align*}
 X^{1-c_1} Y^{2c_1} K^{1-c_1}Z^{2c_1-1}= X^{1-c_2} Y^{2c_2} K^{1-c_2}Z^{2c_2-1}.
\end{align*}

  We then deduce from \eqref{eq:firstbd1} and \eqref{eq:secbd} that the sum of the contributions of the two categories of terms is
\begin{align}
\label{I2}
 \ll (YZ)^{\epsilon}Y^2Z.
\end{align}

\subsection{Conclusion}
   We combine \eqref{S2},  \eqref{Aequals}, \eqref{R0}, \eqref{I3}, \eqref{R2}, \eqref{I1} and \eqref{I2} to see that when $Y \leq X \leq Y^2$,
\begin{align}
\label{Sraw}
\begin{split}
S(X,Y;\Phi, W) =& \frac{XY\log Y}{\pi^2} \widetilde{h}_1(\frac 12,\frac 12)  Z_2(\frac 12,\frac 12) +\frac {XY}4\log (\frac {Y^2}{X})\zeta(0)\widetilde{h}_1(\frac 12,\frac 12) \sum_{(a,2)=1} \frac{\mu(a)Z_3(0;a)}{a^{2}} \\
&+O \left (XY+XY^{1+\epsilon} \left ( \frac {Y}{X}\right )^{1/2}Z^{\epsilon}+XY(\frac {Y^2}{X} )^{-\frac 18+\epsilon}+XY \left ( \frac {Y}{X}\right )^{1/2-\epsilon} \right )\\
&+O\left(X^{1 + \varepsilon}YZ^{-(1-\epsilon)}+(YZ)^{\epsilon}Y^2Z \right).
\end{split}
\end{align}

   We now set $Z=(X/Y)^{1/2}$ and define
\begin{align}
\label{C1C2}
\begin{split}
  C_1(\Phi, W) =& \frac{1}{\pi^2} \widetilde{h}_1(\frac 12,\frac 12)  Z_2(\frac 12,\frac 12), \\
  C_2(\Phi, W) =& \frac {1}4\zeta(0)\widetilde{h}_1(\frac 12,\frac 12) \sum_{(a,2)=1} \frac{\mu(a)Z_3(0;a)}{a^{2}}.
\end{split}
\end{align}
   The proof of Theorem \ref{meansquare} then follows from \eqref{Sraw}.

\noindent{\bf Acknowledgments.} The author is supported in part by NSFC grant 11871082.

\bibliography{biblio}
\bibliographystyle{amsxport}



\end{document}